\documentclass[12pt]{article}

\usepackage[T1]{fontenc}
\usepackage[margin = 1in]{geometry}

\usepackage{amsthm, amssymb, amstext}
\usepackage{xspace}
\usepackage[fleqn]{amsmath}
\usepackage{latexsym}
\usepackage[dvips]{graphicx}
\usepackage{comment}
\usepackage{hyperref}
\usepackage[capitalize]{cleveref}
\usepackage{mathtools}
\usepackage{enumerate} 
\usepackage{paralist}
\usepackage{enumitem}
\usepackage{bm}

\usepackage{todonotes}
\usepackage{tikz}
\usetikzlibrary{calc}
\usetikzlibrary{fit}
\usetikzlibrary{graphs, graphs.standard, positioning}
\usetikzlibrary{decorations.pathmorphing}
\usetikzlibrary{backgrounds}

\crefname{claim}{Claim}{Claims}
\crefname{conjecture}{Conjecture}{Conjectures}
\crefname{question}{Question}{Questions}
\crefname{figure}{Figure}{Figures}

\newcommand{\lca}{\textsf{lca}}

\usepackage{setspace}
\setstretch{1.05}

\makeatletter
\newtheorem*{rep@theorem}{\rep@title}
\newcommand{\newreptheorem}[2]{%
\newenvironment{rep#1}[1]{%
 \def\rep@title{#2 \ref{##1}}%
 \begin{rep@theorem}}%
 {\end{rep@theorem}}}
\makeatother

\newreptheorem{theorem}{Theorem}
\newreptheorem{lemma}{Lemma}
\newreptheorem{corollary}{Corollary}

\newtheorem{theorem}{Theorem}
\newtheorem{lemma}[theorem]{Lemma}

\newtheorem{corollary}[theorem]{Corollary}

\newtheorem{claim}{Claim}

\theoremstyle{definition}

\newcommand\tw{\text{tw}}


\renewcommand\leq{\leqslant}

\renewcommand\geq{\geqslant}

\newcommand\nega{negative\xspace}
\newcommand\posi{positive\xspace}

\newcommand\cro{\mathfrak{c}\xspace}

\definecolor{myRed}{rgb}{0.68, 0.05, 0.0}
\colorlet{myBlue}{blue!70!black}
\colorlet{myViolet}{myBlue!55!myRed}
\definecolor{darkraspberry}{rgb}{0.53, 0.15, 0.34}
\definecolor{olive}{rgb}{0.42, 0.56, 0.14}

\hypersetup{
	colorlinks=true,
        linkcolor=myBlue, 
        citecolor=myBlue,
	bookmarksopen=true,
	bookmarksnumbered,
	bookmarksopenlevel=2,
	bookmarksdepth=3
}
\usepackage{breakurl}
\usepackage{authblk}

\newcommand{\footnoteref}[1]{%
  \hyperref[#1]{\textsuperscript{\ref*{#1}}}%
}

\sloppy

\title{Excluding a Forest Induced Minor}

\author[1]{\'Edouard Bonnet\footnote{É.\ Bonnet has been supported by the French National Research Agency through the project TWIN-WIDTH with reference number ANR-21-CE48-0014.}}
\author[1]{Benjamin Duhamel}
\author[2]{Robert Hickingbotham\footnote{R.\ Hickingbotham is supported by the Belgian National Fund for Scientific Research (FNRS).}}

\affil[1]{CNRS, ENS de Lyon, Université Claude Bernard Lyon 1, LIP, UMR 5668, Lyon, France}

\affil[2]{D\'epartement d'Informatique, Universit\'e libre de Bruxelles, Belgium}

\date{}

\begin{document}

\maketitle

\begin{abstract}
  In the first paper of the \emph{Graph Minors} series [JCTB '83], Robertson and Seymour proved the Forest Minor theorem: the $H$-minor-free graphs have bounded pathwidth if and only if $H$ is a~forest.
  In recent years, considerable effort has been devoted to understanding the unavoidable induced substructures of graphs with large pathwidth or large treewidth.
  In this paper, we give an induced counterpart of the Forest Minor theorem: for any $t \geqslant 2$, the $K_{t,t}$-subgraph-free\footnote{Most induced analogues of graph minor results tend to be more meaningful and interesting within weakly sparse classes, i.e., $K_{t,t}$-subgraph-free for some fixed~$t$. It is straightforward to derive from known facts that the class of $H$-induced-minor-free graphs has bounded pathwidth (resp.~treewidth) if and only if $H$ is a~clique on at~most~two (resp.~at~most~four) vertices.\label{fn:ws}} $H$-induced-minor-free graphs have bounded pathwidth if and only if $H$ belongs to a~class $\mathcal F$ of forests, which we describe as the induced minors of two (very similar) infinite parameterized families.  

  This constitutes a significant step toward classifying the graphs $H$ for which every weakly sparse $H$-induced-minor-free class has bounded treewidth.
  Our work builds on the theory of constellations developed in the \emph{Induced Subgraphs and Tree Decompositions} series. 
\end{abstract}

\section{Introduction}

In this paper, we fully classify the unavoidable induced minors in weakly sparse\footnoteref{fn:ws} graph classes with unbounded pathwidth.
Pathwidth and treewidth are central parameters in algorithmic and structural graph theory.
A substantial body of work has been devoted to understanding the unavoidable substructures in graphs with large pathwidth/treewidth.
For the subgraph and minor relations, the situation is well understood.
The Excluded Forest Minor Theorem~\cite{RS1} states that a~graph class $\mathcal G$ has bounded pathwidth if and only if $\mathcal G$ excludes some forest as a~minor.
Likewise, the Excluded Grid Minor Theorem~\cite{ROBERTSON198692} states that a~graph class $\mathcal G$ has bounded treewidth if and only if $\mathcal G$ excludes some planar graph as a~minor. 

In contrast, the picture for induced substructures is more subtle.
First, any graph class~$\mathcal G$ of bounded pathwidth or treewidth must exclude both $K_t$ and $K_{t,t}$ as induced subgraphs for some $t$.
By Ramsey's theorem, this is equivalent to $\mathcal G$ being \emph{weakly sparse}, meaning that it excludes $K_{t',t'}$ as a~subgraph for some $t'$.
Second, for $\mathcal G$ to have bounded pathwidth, it must exclude some forest as an induced minor.
However, these two conditions are not sufficient: Pohoata~\cite{Pohoata14} constructed a weakly sparse family of graphs (later rediscovered by Davies~\cite{Davies22}, and now called \emph{Pohoata--Davies grids}; see \cref{fig:pohoata-grid}) that excludes a sufficiently large complete binary tree as an induced minor yet still has unbounded pathwidth (and treewidth).

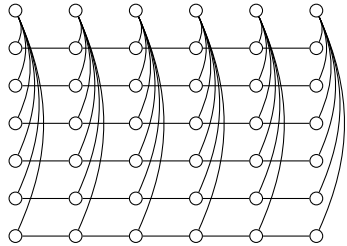
\begin{figure}[h!]
	\centering
	\begin{tikzpicture}[vertex/.style={draw,circle,inner sep=0.06cm}]
		\def\k{6}
		\pgfmathtruncatemacro\km{\k - 1}
		\def\sv{0.5}
		\def\sh{0.8}
		
		\foreach \i in {1,...,\k}{
			\foreach \j in {1,...,\k}{
				\node[vertex] (a\i\j) at (\i * \sh, \j *\sv) {};
			}
		}
		\foreach \i in {1,...,\k}{
			\node[vertex] (s\i) at (\i * \sh, \k * \sv + \sv) {};
		}
		
		\foreach \i [count = \ip from 2] in {1,...,\km}{
			\foreach \j in {1,...,\k}{
				\draw (a\i\j) -- (a\ip\j) ;
			}
		}
		
		\foreach \i in {1,...,\k}{
			\foreach \j in {1,...,\k}{
				\draw (s\i) to [bend left = 24] (a\i\j) ;
			}
		}
	\end{tikzpicture}
	\caption{The $6 \times 6$ Pohoata--Davies grid.}
        \label{fig:pohoata-grid}
\end{figure}

In this paper, we determine precisely the graphs whose exclusion as an induced minor implies bounded pathwidth within every weakly sparse graph class.
Call $K_{1,3}$ a \emph{claw}.
An \emph{asteroidal triple} in a~graph is an independent set of three vertices such that between any two of them there exists a~path avoiding the neighborhood of the third.

\begin{theorem}\label{thm:main}
	For every graph $F$ and integer $t \geqslant 2$, the class of graphs excluding $F$ as an induced minor and $K_{t,t}$ as a~subgraph has bounded pathwidth if and only if $F$ is a forest \emph{without} either of the following:
	\begin{compactitem}
		\item three pairwise disjoint claws whose contraction into say, $a$, $b$, $c$, yields a~forest in which $a, b, c$ form an asteroidal triple; 
		\item four pairwise disjoint claws on vertex subsets $A, B, C, D$ such that there is an $A$--$B$ path $P$ in $F-N_F[C \cup D]$, a~$C$--$D$ path $Q$ in $F-N_F[A \cup B]$, at most one edge between $A \cup B$ and $C \cup D$, and if this edge exists, it is not incident to any claw center.
	\end{compactitem}
\end{theorem}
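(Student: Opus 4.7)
The plan is to prove both directions of the equivalence.

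For the ``only if'' direction, I would exhibit, for each type of forbidden~$F$, a weakly sparse family of graphs of unbounded pathwidth excluding~$F$ as an induced minor. If $F$ contains a cycle, sufficiently deep complete binary trees already do the job: they are forests (so contain no cycle as a minor), are $K_{2,2}$-subgraph-free, and have unbounded pathwidth. If $F$ is a forest containing the first (asteroidal) obstruction, I would start from the Pohoata--Davies grid in \cref{fig:pohoata-grid} and attach three apex-like gadgets arranged to enforce the asteroidal topology. For the four-claws obstruction, I would instead augment the grid with a long ``bridge'' path connecting two of its regions. In either case, verifying the non-existence of an induced-minor model of~$F$ boils down to a careful analysis of how three or four disjoint branch sets could be laid out around the apex/bridge structure, using the fact that in these constructions any ``long'' branch set is forced to interact with the apex/bridge gadgets.

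For the ``if'' direction, which carries the main technical content, let $F$ be a forest avoiding both forbidden configurations. The first step is to re-express this combinatorial condition structurally as: $F$ is an induced minor of some graph~$T_k$ in one of two explicit infinite parameterized families announced in the abstract. This equivalence should follow from analyzing how a forest avoiding the two obstructions is organized around a central spine with bounded-complexity attachments. Because induced minor is transitive, every $F$-induced-minor-free graph is $T_k$-induced-minor-free, so it suffices to show that for every~$k$ and~$t$, the class of $K_{t,t}$-subgraph-free $T_k$-induced-minor-free graphs has pathwidth bounded in terms of $t$ and~$k$.

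To prove this last statement, I would argue by contradiction using the constellations framework from the \emph{Induced Subgraphs and Tree Decompositions} series. Assuming a $K_{t,t}$-subgraph-free graph~$G$ of enormous pathwidth with no~$T_k$ induced-minor, one extracts from~$G$ a large constellation using the pathwidth assumption, then applies Ramsey-type arguments together with the weakly-sparse hypothesis to tame adjacencies between its constituents. The goal is to orchestrate a sequence of contractions and deletions inside the constellation to expose~$T_k$ as an induced-minor model, contradicting the assumption. The main obstacle, and where the novel technical content must lie, is this final assembly: constellations naturally supply paths and branch vertices, but producing the prescribed shape of~$T_k$ as an \emph{induced} minor (rather than merely a minor) demands delicate cleaning to eliminate unwanted adjacencies without destroying the target topology. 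I expect the detailed combinatorial structure of the two parameterized families to be crucial here, enabling a controlled induction or case analysis on the attachment pattern of~$T_k$ along its spine.
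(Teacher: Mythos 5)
Your ``if'' direction follows the paper's architecture: reformulate the obstruction-free forests as induced minors of two parameterized families ($T_{2,\ell}\cup\ell T_{1,\ell}$ and $T_{3,\ell}\cup\ell T_{1,\ell}$), then use the constellation machinery of Chudnovsky--Hajebi--Spirkl to extract these targets from any weakly sparse graph of huge pathwidth. You do leave the two hardest steps as assertions --- the structural classification of obstruction-free forests (a long case analysis on least common ancestors of claw locations in the paper) and the construction of induced minor models inside constellations, which the paper must do separately for the \emph{interrupted} and \emph{$q$-zigzagged} cases --- but the plan itself is the one that works.

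The genuine gap is in the ``only if'' direction. For the asteroidal-triple obstruction you propose to \emph{modify} the Pohoata--Davies grid by attaching apex gadgets; this is backwards. The unmodified grid already excludes every such forest: any connected non-path branch set must capture a star vertex, and among any three star vertices one has a neighborhood separating the other two, contradicting the asteroidal condition. Attaching extra structure only risks re-introducing $F$ as an induced minor. More seriously, for the four-claw obstruction a ``grid with a long bridge path'' cannot work, because the Pohoata--Davies grid genuinely \emph{contains} forests realizing only the four-claw obstruction (the four claws can be modeled by four star vertices, and the required $A$--$B$ and $C$--$D$ paths routed through distinct rows, since the asteroidal separation property fails for such forests). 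You need a second, structurally different weakly sparse family of unbounded pathwidth --- the death stars of Bonamy et al.\ --- whose nested construction forces every path between two star vertices to meet the neighborhood of every later-added star vertex; that is precisely what rules out two far-apart pairs of claws joined by disjoint non-adjacent paths. Without identifying this second construction, the forward direction of the theorem does not go through.
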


We say a~graph $H$ is \emph{\posi} if every weakly sparse class excluding $H$ as an induced minor has bounded pathwidth, and \emph{\nega} otherwise.
As we will see, \cref{thm:main} can be reformulated as follows.

\begin{theorem}[reformulation of~\cref{thm:main}]\label{thm:main-alt}
	A~graph is \posi if and only if it is an induced minor of $T_{2,\ell} \cup \ell T_{1,\ell}$ or of~$T_{3,\ell} \cup \ell T_{1,\ell}$ for some integer $\ell \geqslant 2$.
\end{theorem}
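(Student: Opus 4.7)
The plan is to derive \cref{thm:main-alt} directly from \cref{thm:main} by proving that a forest $F$ avoids both forbidden configurations if and only if $F$ is an induced minor of $T_{2,\ell} \cup \ell T_{1,\ell}$ or of $T_{3,\ell} \cup \ell T_{1,\ell}$ for some $\ell \geqslant 2$. Denote these two classes of forests by $\mathcal F_1$ and $\mathcal F_2$ respectively. Both classes are clearly closed under taking induced minors, and $\mathcal F_2$ is closed under disjoint union (by absorbing extra components into the $\ell T_{1,\ell}$ part and enlarging~$\ell$), so it suffices to prove set equality.

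For the inclusion $\mathcal F_2 \subseteq \mathcal F_1$, I would proceed by direct inspection of the host trees. The key feature of $T_{2,\ell}$ and $T_{3,\ell}$ is that their ``branching set'' (the subtree formed by vertices of degree $\geqslant 3$) is respectively a path, or essentially a path with one extra short branch. Hence in any three disjoint claws sitting inside $T_{k,\ell}$ we can label the centers $a,b,c$ so that one of them, say $b$, lies on the path between the other two inside the branching set; contracting each claw then places $b$ in the closed neighborhood of every $a$--$c$ path, so $\{a,b,c\}$ is not an asteroidal triple. A similar linear-ordering argument rules out the four-claw configuration: two disjoint claw-pairs connected by internally neighborhood-avoiding paths would, by the backbone structure of $T_{k,\ell}$, have to cross and thereby violate the restriction on edges between $A \cup B$ and $C \cup D$. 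The disjoint copies of $T_{1,\ell}$ are stars/brooms that contain no claw at all (or only trivial ones), so they cannot contribute to either forbidden configuration.

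For the reverse inclusion $\mathcal F_1 \subseteq \mathcal F_2$, I would take $F \in \mathcal F_1$ and build an explicit induced-minor embedding. First, handle the components of $F$ that contain no claw: each such component is a path or a subdivided star of bounded size and embeds as an induced minor of a single copy of $T_{1,\ell}$ for $\ell$ large enough, which is absorbed into the $\ell T_{1,\ell}$ part. For each component $F'$ of $F$ that does contain claws, let $S(F')$ be the set of claw centers. The absence of the first forbidden configuration is equivalent to saying that, after contracting each claw, the centers of $S(F')$ form no asteroidal triple in the resulting forest; in a tree this forces $S(F')$ to lie along a single path, so $F'$ is ``caterpillar-like'' with respect to its claws. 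Now split on the second configuration: if the pendant branches attached to this spine are all essentially linear, $F'$ embeds into $T_{2,\ell}$; otherwise the four-claw condition pinpoints the only allowed deviation --- at most one spine vertex may carry a short additional path of claws --- which embeds into $T_{3,\ell}$. Taking $\ell$ at least the maximum degree and size of $F$ gives enough room for the induced-minor embedding.

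The main obstacle will be the structural step in the backward direction, namely rigorously extracting the ``bounded-complexity spine'' of $F$ from the mere absence of the two forbidden configurations, and then verifying that the resulting embedding into $T_{k,\ell}$ preserves non-edges as well as edges (the defining requirement of an induced minor). The four-claw condition is the most delicate to use, since it simultaneously controls path avoidance, the incidence of edges to claw centers, and the existence of a single edge between the two claw-pairs; translating this into a local structural property of $F$ that matches the geometry of $T_{3,\ell}$ is where the argument has to be most careful.
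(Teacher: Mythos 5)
Your reduction is circular relative to how these two theorems can actually be established. You treat \cref{thm:main} as available and reduce \cref{thm:main-alt} to the purely combinatorial equivalence ``$F$ avoids both configurations iff $F$ is an induced minor of some $T_{2,\ell} \cup \ell T_{1,\ell}$ or $T_{3,\ell} \cup \ell T_{1,\ell}$.'' But the ``if'' direction of \cref{thm:main} (configuration-free forests are \posi) has no proof independent of \cref{thm:main-alt}: the paper proves it precisely by first establishing the structural equivalence and then showing that every sufficiently large $d$-ample interrupted or $q$-zigzagged constellation contains $T_{2,\ell} \cup \ell T_{1,\ell}$ and $T_{3,\ell} \cup \ell T_{1,\ell}$ as induced minors (\cref{lem:t2l-in-zigzagged,lem:t2l-in-interrupted}), invoking \cref{lem:pw-constell}. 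Your proposal contains none of this analytic content --- no constellations, no explanation of why the $T_{k,\ell}$ families are unavoidable in weakly sparse classes of large pathwidth --- so even a complete verification of the set equality would not yield the theorem; it would only re-derive one known statement from another whose proof you have not supplied.

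The combinatorial equivalence itself also contains concrete errors. The claim that $\mathcal F_2$ is closed under disjoint union is false: in $2T_{2,2}$ both components fail to embed in any $T_{1,\ell}$ (each contains two vertex-disjoint claws), and since a connected branch set must lie inside a single component of the host, $2T_{2,2}$ is not an induced minor of any $T_{2,\ell} \cup \ell T_{1,\ell}$ or $T_{3,\ell} \cup \ell T_{1,\ell}$; indeed it satisfies the four-claw condition with $P$ and $Q$ in different components and is \nega\ (this is exactly \cref{lem:2-not-in-T1}). Your component-by-component strategy for $\mathcal F_1 \subseteq \mathcal F_2$ therefore cannot work: the characterization permits at most one ``complex'' component, and excluding a second one requires applying the four-claw configuration \emph{across} components, which your outline never does. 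Furthermore, the assertion that absence of the asteroidal-triple configuration forces the claw centers of a component to lie along a single path is wrong --- $T_{1,\ell}$ itself has $\ell$ claw centers arranged in a star around its root, yet contains no asteroidal triple of contracted claws --- and the genuine structural analysis (the multi-case argument of \cref{lem:not-T2l} on least common ancestors of minimal claw-bearing subtrees, distinguishing the $T_{2,\ell}$ and $T_{3,\ell}$ outcomes) is precisely the step you defer as ``the main obstacle.'' As written, the proposal is an outline of part of one direction, resting on a circular premise and a false closure claim.
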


In the previous theorem, $T_{1,\ell}$ is the graph of~\cref{fig:T1l}, $T_{2,\ell}$, of~\cref{fig:T2l}, and $T_{3,\ell}$, of~\cref{fig:T3l}. 
The class $\mathcal F$ of forests mentioned in the abstract is simply the induced-minor closure of $\{T_{2,\ell} \cup \ell T_{1,\ell}, T_{3,\ell} \cup \ell T_{1,\ell}~:~\ell \geqslant 2\}$.
Thus, the \posi graphs are exactly the forests of~$\mathcal F$.
We present both formulations because, as will become clear, \cref{thm:main} is better suited for showing that a~graph is \nega, whereas \cref{thm:main-alt} is better suited for proving that a~forest is \posi.
Every graph that is not a~forest is \nega, since the class of forests has unbounded pathwidth.
Thus we \emph{only} need to classify forests.

We say that a~graph $H$ is \emph{$\tw$-\posi} if every weakly sparse class excluding $H$ as an induced minor has bounded treewidth, and \emph{$\tw$-\nega} otherwise.
Since bounded-treewidth graphs that exclude a~forest as an induced minor have bounded pathwidth~\cite{Hickingbotham22}, a forest is \posi if and only if it is $\tw$-\posi. 
Bonamy, Bonnet, Déprés, Esperet, Geniet, Hilaire, Thomassé, and Wesolek~\cite{Bonamy24} constructed a weakly sparse family of graphs (the so-called \emph{death stars}; see \cref{fig:death-star}) of unbounded treewidth that excludes $2K_3$ as an induced minor.
Ahn, Gollin, Huynh, and Kwon~\cite{AGHK2025EP} further showed that any graph $H$ with no $2K_3$ induced minor (which includes all $\tw$-\posi graphs) contains a~set $X\subseteq V(H)$ with $|X|\leq 12$ such that the subgraph of $H$ induced by deleting all the vertices at distance at~least~4 from $X$ is a forest.
Thus, every $\tw$-\posi graph is close to being a forest.
As such, classifying the $\tw$-\posi forests is a~significant step toward the broader task of classifying all $\tw$-\posi graphs. 

\medskip

Contrary to $H$-minor detection, which admits a~polynomial-time algorithm for any fixed~$H$, the complexity of $H$-induced-minor detection is less clear and offers a~richer landscape. 
Korhonen and Lokshtanov~\cite{KL2024InducedMinor} showed that there exist trees whose detection as an induced minor is NP-complete.
On the other hand, Dallard, Dumas, Hilaire, and Perez~\cite{Dallard25} give some infinite families of graphs $H$ such that $H$-induced-minor detection is polynomial-time solvable; we refer the interested reader to their introduction for a~detailed survey on this topic.

An algorithmic consequence of~\cref{thm:main-alt} is an efficient detection of \posi forests as induced minors within any weakly sparse class.  

\begin{corollary}\label{cor:alg}
	Let $\mathcal C$ be a~weakly sparse class and $F$ be an induced minor of some $T_{2,\ell} \cup \ell T_{1,\ell}$ or of~$T_{3,\ell} \cup \ell T_{1,\ell}$.
	Then, one can decide whether $F$ is an induced minor of an input $G \in \mathcal C$ in linear time.
\end{corollary}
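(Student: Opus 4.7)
The plan is to combine \cref{thm:main-alt} with the linear-time algorithmic metatheorem for monadic second-order logic on graphs of bounded treewidth. Since $\mathcal{C}$ is weakly sparse (with some fixed parameter $t$) and $F$ is \posi, \cref{thm:main-alt} yields a constant $k = k(F, t)$ such that every graph $G \in \mathcal{C}$ that does not contain $F$ as an induced minor satisfies $\tw(G) \le \text{pw}(G) \le k$. The contrapositive is the key: if $\tw(G) > k$ and $G \in \mathcal{C}$, then $F$ is already an induced minor of $G$.

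The algorithm on input $G \in \mathcal{C}$ then proceeds in two phases. First, run Bodlaender's linear-time algorithm to either produce a tree decomposition of $G$ of width at most $k$, or certify that $\tw(G) > k$; in the latter case, output \emph{yes} by the observation above. Otherwise, exploit the fact that, since $F$ is fixed, the property ``$F$ is an induced minor of $G$'' is expressible in monadic second-order logic: one existentially quantifies over $|V(F)|$ pairwise disjoint non-empty vertex subsets $(V_u)_{u \in V(F)}$, asserts that each $G[V_u]$ is connected, and enforces, for every pair $u \neq v$ in $V(F)$, that there is an edge between $V_u$ and $V_v$ in $G$ if and only if $uv \in E(F)$. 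Applying Courcelle's theorem to the resulting formula and the tree decomposition of width at most $k$ decides the property in linear time.

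The main obstacle is merely bookkeeping: one must check that the quantifier alternation above truly fits into MSO (connectivity of a vertex set is a standard MSO predicate), and that the multiplicative constant, which depends on $F$, $k$, and hence on $F$ and $t$, is absorbed into the ``linear time'' bound, which is fine because $F$ and $\mathcal{C}$ are fixed. Alternatively, since only treewidth is used, one could replace the Courcelle step by a direct dynamic program over the bag structure enumerating candidate branch-set traces of bounded size, but invoking MSO is cleaner and immediate from \cref{thm:main-alt}.
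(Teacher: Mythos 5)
Your proposal is correct and follows essentially the same win--win argument as the paper: conclude from \cref{thm:main-alt} that $F$-induced-minor-free graphs of $\mathcal C$ have bounded pathwidth (hence treewidth), run a linear-time FPT treewidth algorithm to either answer \emph{yes} immediately or obtain a bounded-width decomposition, and then apply Courcelle's theorem to the MSO-expressible induced-minor property. The only cosmetic difference is that the paper invokes Korhonen's 2-approximation rather than Bodlaender's exact algorithm; either works.
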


Indeed, this follows from a~classical win-win argument.
First, compute a~2-approximation of the treewidth of the input graph $G$ using the fixed-parameter linear-time algorithm of~\cite{Korhonen21}, which outputs a~tree-decomposition. 
Either the treewidth of $G$ is sufficiently large and we can immediately conclude $G$ admits $F$ as an induced minor, or its treewidth is upper bounded (by a~function of $\mathcal C$ and~$F$) and we can invoke Courcelle's theorem~\cite{Courcelle90} as the existence of an $F$ induced-minor can be expressed in monadic second-order logic.
In case $F$ is an induced minor of~$G$, one further obtains in polynomial time an induced minor model of~$F$ in~$G$, by checking that the proofs in~\cite{istd16,istd18} and the current paper can be made effective.  

Let us mention some results related to~\cref{cor:alg} in that they address the complexity of induced-minor detection within ``sparse'' graph classes. 
By a~similar scheme and a~theorem due to Korhonen~\cite{Korhonen23}, one can efficiently detect any fixed planar induced minor in any bounded-degree class.
This can be lifted to any class excluding a~fixed topological minor~\cite{AbrishamiACHS24,Bonnet25}. 
Finally, detecting \emph{any} fixed graph as an induced minor is tractable in classes excluding a~fixed path as an induced subgraph~\cite{Dallard25}. 

\subsection{Proof Outline}

Our proof relies on the theory of constellations developed in the \emph{Induced Subgraphs and Tree Decompositions} series~\cite{istd-series}.
A~\emph{constellation} is a~graph that contains a spanning induced minor model of a~biclique where the branch sets for one side of the bipartition are singletons, and those on the other side induce paths.
There are two important types of constellations: \emph{interrupted} (generalizations of death stars) and \emph{zigzagged} (generalizations of Pohoata--Davies grids); see \cref{subsec:constellations} for their definitions. 

Chudnovsky, Hajebi, and Spirkl~\cite{istd18} showed that for any forest $F$, every weakly sparse $F$-induced-minor-free graph with sufficiently large pathwidth contains a~large interrupted or zigzagged constellation as an induced subgraph; see \cref{lem:pw-constell}.
Their work establishes the unavoidable (families of) induced subgraphs in classes of unbounded pathwidth.
Since large constellations have large pathwidth (as they contain large bicliques as induced minors), it follows that a~forest $F$ is \posi if and only if all large interrupted or zigzagged constellations contain $F$ as an induced minor. 

In \cref{sec:constructions} we show that every forest that meets one of the items in \cref{thm:main} is not an induced minor of either the Pohoata--Davies grids or the death stars.
As these graph families (or slight variants on them) are $K_{2,2}$-subgraph-free with unbounded treewidth; this yields the forward direction of \cref{thm:main}.
We use that to prove, in \cref{sec:negative-forests}, that every forest that is not an induced minor of any $T_{2,\ell} \cup \ell T_{1,\ell}$ or~$T_{3,\ell} \cup \ell T_{1,\ell}$ satisfies one of the items of~\cref{thm:main}, and hence is~\nega. 
Finally in \cref{sec:positive-forests}, we prove that every other forest (i.e., any induced minor of some $T_{2,\ell} \cup \ell T_{1,\ell}$ or~$T_{3,\ell} \cup \ell T_{1,\ell}$) is an induced minor of any sufficiently large interrupted or zigzagged constellation, and hence is \posi (by~\cref{lem:pw-constell}).
This completes the proof of \cref{thm:main-alt} and establishes its equivalence with \cref{thm:main}.

\section{Preliminaries}\label{sec:prelim}

Given integers~$i,j$, we denote by $[i,j]$ the set of integers that are at least $i$ and at~most~$j$, and $[i]$ is a~shorthand for $[1,i]$. 

\subsection{Standard graph-theoretic definitions and notation}

We denote by $V(G)$ and $E(G)$ the set of vertices and edges of a graph $G$, respectively.
A~graph $H$ is a~\emph{subgraph} of a~graph $G$, denoted by $H \subseteq G$, if $H$ can be obtained from $G$ by vertex and edge deletions.
Graph~$H$ is an~\emph{induced subgraph} of $G$ if $H$ is obtained from $G$ by vertex deletions only.
For $S \subseteq V(G)$, the \emph{subgraph of $G$ induced by $S$}, denoted $G[S]$, is obtained by removing from $G$ all the vertices that are not in $S$.
Then $G-S$ is a shorthand for $G[V(G)\setminus S]$.
We denote by $N_G(v)$ the set of neighbors of~$v$ in~$G$, define $N_G[v] := N_G(v) \cup \{v\}$, $N_G[S] := \bigcup_{v \in S} N_G[v]$, $N_G(S) := N_G[S] \setminus S$, and may omit the $G$ subscript when it is clear from the context. For a graph $J$ and integer $\ell\geq 1$, the graph $G\cup J$ is the disjoint union of $G$ and $J$ and the graph $\ell J$ is the disjoint union of $\ell$ copies of $J$.

A~set $X \subseteq V(G)$ is connected (in $G$) if $G[X]$ has a~single connected component.
The \emph{girth} of a~graph is the number of edges in one of its shortest cycles, and $\infty$ if the graph is acyclic.
A~graph class is \emph{weakly sparse} if it excludes $K_{t,t}$ as a~subgraph for some positive integer~$t$.
A~\emph{subdivision} of a~graph $G$ is any graph $H$ obtained from $G$ by replacing edges $e$ of~$G$ by paths with at~least~one edge whose extremities are the endpoints of~$e$.
We then say that $H$ contains $G$ as an \emph{induced subdivision}.

\medskip

A~\emph{tree-decomposition} of a~graph $G$ is a~family $(B_x)_{x\in V(T)}$ of subsets of~$V(G)$ (called \emph{bags}) indexed by the vertices of a tree $T$, such that
\begin{compactitem}
\item for every vertex $v \in V(G)$, $\{x \in V(T)~:~v \in B_x\}$ induces a non-empty (connected) subtree of $T$, and 
\item for every edge $uv \in E(G)$, some bag $B_x$ contains both $u$ and $v$.
\end{compactitem}
  \medskip
  The \emph{width} of~$(B_x)_{x\in V(T)}$ is $\max\{|B_x|~:~x \in V(T)\}-1$.
  The \emph{treewidth} of $G$ is the minimum width of a tree-decomposition of $G$. 
  A~\emph{path-decomposition} is a tree-decomposition in which the underlying tree is a path, and similarly the \emph{pathwidth} of~$G$ is the minimum width of a~path-decomposition of~$G$.

\medskip
  
  A~graph $H$ is a \emph{minor} of a~graph $G$ if $H$ is isomorphic to a~graph that can be obtained from a subgraph of $G$ by contracting edges.
  A~graph $H$ is an \emph{induced minor} of a~graph $G$ if $H$ is isomorphic to a~graph that can be obtained from an induced subgraph of $G$ by contracting edges.
  Equivalently, $H$ is an induced minor of $G$ if there is a~collection $\mathcal M=\{X_v \subseteq V(G)~:~v \in V(H)\}$ of pairwise disjoint connected sets (called \emph{branch sets}) such that $X_u$ and $X_v$ are adjacent if and only if $uv \in E(H)$.
  We call $\mathcal M$ an \emph{induced minor model} of~$H$ in~$G$.
  A~graph~$G$ is \emph{$H$-induced-minor-free} (resp.~\emph{$H$-minor-free}) if~$H$ is not an induced minor (resp.~a~minor) of~$G$.

\subsection{Constellations}\label{subsec:constellations}
  
Constellations are introduced in the \emph{Induced Subgraphs and Tree Decompositions} series~\cite{istd-series}; see for instance~\cite{istd16}.
We recall the relevant definitions and results.

A~\emph{constellation} $\cro = (S_\cro, \mathcal L_\cro)$ is~a~graph (also denoted by $\cro$) such that $S_\cro$ is an independent set of $\cro$, every connected component of $\cro-S_\cro$ is a~path, each of which is an element of~$\mathcal L_\cro$, and every vertex of $S_\cro$ has at~least one neighbor in each path of~$\mathcal L_\cro$.
To highlight the size of the independent set $S_\cro$ and the number of paths in $\mathcal L_\cro$, $\cro$ is more specifically called an \emph{$(|S_\cro|, |\mathcal L_\cro|)$-constellation}. 
In particular, $\{\{v\}~:~v \in S_\cro\} \cup \{V(P)~:~P \in \mathcal L_\cro\}$ is an induced minor model of the biclique $K_{|S_\cro|, |\mathcal L_\cro|}$.
Also note that for every $S \subseteq S_\cro$ and $\mathcal L \subseteq \mathcal L_\cro$, the pair $(S,\mathcal L)$ induces a~constellation in~$\cro$.

A~\emph{$\cro$-route} is a~path in~$\cro$ whose endpoints are both in~$S_\cro$ but no internal vertices of the path are in~$S_\cro$.
In particular, the internal vertices of a~$\cro$-route induce a~subpath of some element of~$\mathcal L_\cro$.
A~constellation $\cro$ is \emph{$d$-ample} if there is no $\cro$-route on at~most~$d+1$ edges. In particular, for $d\geq 1$, the neighborhoods of vertices in $S_\cro$ are pairwise disjoint.
For a~technical reason, we will need that the vertices close to endpoints of the paths of $\mathcal L_\cro$ have no neighbors in~$S_\cro$.
We now show that this can be arranged without loss of generality.

\begin{lemma}\label{lem:technical}
  For any positive integers $s, p, d$, and any $d$-ample $(s+2p,p)$-constellation $\cro = (S_{\cro}, \mathcal L_{\cro})$, there is some $S \subseteq S_\cro$ such that in the $d$-ample $(s,p)$-constellation induced by $(S, \mathcal L_\cro)$ no vertex of $S$ has a~neighbor in $P \in \mathcal L_\cro$ at~distance (in~$P$) less than~$d$ from an extremity of~$P$.
\end{lemma}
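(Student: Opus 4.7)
The plan is to use the $d$-ampleness condition to bound the number of ``offending'' vertices of $S_\cro$ per path-extremity, discarding those to keep the desired $s$ vertices. More precisely, for each path $P \in \mathcal L_\cro$ and each of its two extremities $e$, let $E_{P,e}$ denote the set of vertices of $P$ at distance (in $P$) less than $d$ from $e$. I would call a vertex $v \in S_\cro$ \emph{bad for $(P,e)$} if $v$ has a neighbor in $E_{P,e}$, and the goal becomes producing an $S \subseteq S_\cro$ of size $s$ that contains no vertex bad for any pair $(P,e)$.

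The key step is the claim that \emph{for any fixed $(P,e)$, at most one vertex of $S_\cro$ is bad for $(P,e)$}. Suppose toward contradiction that distinct $u,v\in S_\cro$ are both bad for $(P,e)$, witnessed by neighbors $x,y\in E_{P,e}$ on $P$. Since $d\geqslant 1$, the hypothesis that $\cro$ is $d$-ample forces the $S_\cro$-neighborhoods to be pairwise disjoint, so $x \neq y$. The subpath of $P$ from $x$ to $y$ has at most $d-1$ edges (both endpoints lie within distance $d-1$ of $e$), and prepending the edge $ux$ and appending the edge $yv$ yields a $\cro$-route from $u$ to $v$ with at most $(d-1)+2 = d+1$ edges, contradicting $d$-ampleness.

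There are exactly $2p$ pairs $(P,e)$, so the total number of bad vertices of $S_\cro$ is at most $2p$. Removing all of them from $S_\cro$ leaves at least $(s+2p)-2p = s$ vertices; let $S$ be any subset of size exactly $s$ among those. By construction, no vertex of $S$ has a neighbor within distance less than $d$ of any path extremity. It remains to observe that $(S,\mathcal L_\cro)$ still induces a $d$-ample $(s,p)$-constellation: the defining property that every vertex of $S$ has a neighbor in each $P \in \mathcal L_\cro$ is inherited from $\cro$, and any $\cro'$-route in the sub-constellation is also a $\cro$-route, hence has more than $d+1$ edges.

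There is essentially no obstacle; the only mildly delicate point is checking the boundary case where a path $P$ is short enough that the two end-segments $E_{P,e_1},E_{P,e_2}$ overlap or cover all of $P$, but the argument above applies verbatim in that case since the bound on the number of edges in the subpath between $x$ and $y$ remains valid.
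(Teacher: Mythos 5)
Your proposal is correct and follows essentially the same route as the paper: both arguments discard at most $2p$ vertices of $S_\cro$ (one per path-extremity) and use $d$-ampleness to show that two vertices of $S_\cro$ cannot both have neighbors within distance $d-1$ of the same extremity, since that would create a $\cro$-route on at most $d+1$ edges. Your write-up just makes explicit the "at most one bad vertex per end" count that the paper leaves implicit.
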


\begin{proof}  
  Let $\widehat{S}\subseteq S_\cro$ be the set of vertices that have a~neighbor closest to an end of a path in $\mathcal{L}_\cro$. As there are $p$ paths, $|\widehat{S}|\leq 2p$. Let $S\subseteq S_\cro\setminus\widehat{S}$ with $|S|=s$.
  Then $(S, \mathcal L_\cro)$ is a $d$-ample constellation.
  The property of the lemma then follows by the $d$-ampleness of~$\cro$.
\end{proof}

For simplicity, we henceforth incorporate the conclusion of \cref{lem:technical} into the definition of $d$-ampleness.
Due to this lemma, the subsequent results of the section hold with this slight strengthening. 

We will deal with two kinds of constellations: \emph{interrupted} and \emph{$q$-zigzagged} for some positive integer~$q$.
The former ones generalize the so-called \emph{death star} construction, while the latter extend the Pohoata--Davies grids (see~\cref{sec:constructions} for these constructions).
Both types of constellations entail a~total order over~$S_\cro$.
A~constellation $\cro = (S_\cro, \mathcal L_\cro)$ is \emph{interrupted} if there is a~total order $\prec$ on~$S_\cro$ such that for every $x \prec y \prec z$, every $\cro$-route between $x$ and $y$ is adjacent to~$z$.
For any positive integer $q$, $\cro$ is instead \emph{$q$-zigzagged} if there is a~total order $\prec$ on~$S_\cro$ such that for every $\cro$-route $P$ between $x$ and $y$, there are fewer than $q$ vertices of $\{z~\in S_\cro:~x \prec z \prec y\}$ that are non-adjacent to~$V(P)$.
Again, one can observe that for every $S \subseteq S_\cro$ and $\mathcal L \subseteq \mathcal L_\cro$, the pair $(S,\mathcal L)$ is an interrupted (resp.~$q$-zigzagged) constellation if $\cro$ is an interrupted (resp.~$q$-zigzagged) constellation.

The following is a~restatement of~\cite[Theorem 2.1]{istd18}.\footnote{The outcomes of item (a) in~\cite[Theorem 2.1]{istd18} are forbidden by the absence of $K_{t,t}$ subgraph and forest induced minor.}

\begin{lemma}[Theorem 2.1 in~\cite{istd18}]\label{lem:pw-constell}
  There are functions $f_{\ref{lem:pw-constell}}$ and $g_{\ref{lem:pw-constell}}$ such that for any forest $F$ and positive integers $t, s, p, d$, and $q \geqslant f_{\ref{lem:pw-constell}}(|V(F)|,t)$, any $K_{t,t}$-subgraph-free $F$-induced-minor-free graph with pathwidth at least $g_{\ref{lem:pw-constell}}(|V(F)|,t,s,p,d)$ admits as an induced subgraph a~$d$-ample $(s,p)$-constellation that is either interrupted or $q$-zigzagged.   
\end{lemma}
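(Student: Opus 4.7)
The plan is to derive this statement as a direct specialization of the trichotomy \cite[Theorem~2.1]{istd18}, which already does the heavy lifting. That theorem shows that, for appropriate thresholds on the pathwidth (as a function of several parameters), every graph falls into one of three categories: (a)~it contains one of an explicit list of substructures; (b)~it contains a large $d$-ample interrupted constellation as an induced subgraph; or (c)~it contains a large $d$-ample $q$-zigzagged constellation as an induced subgraph. Two things remain: to rule out outcome~(a) under our two hypotheses, and to upgrade the resulting constellation to the strengthened notion of $d$-ampleness adopted in the paper.

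For the first task, I would inspect the substructures produced by outcome~(a) in \cite[Theorem~2.1]{istd18}. Each of them either contains $K_{t,t}$ as a subgraph (and is therefore forbidden by the $K_{t,t}$-subgraph-free hypothesis), or, once its parameters are taken large enough, contains every fixed forest---in particular $F$---as an induced minor (and is therefore forbidden by the $F$-induced-minor-free hypothesis). I would thus inflate the parameters fed into \cite[Theorem~2.1]{istd18} so that every structure appearing in outcome~(a) yields one of these two contradictions; the resulting pathwidth threshold becomes our $g_{\ref{lem:pw-constell}}(|V(F)|,t,s,p,d)$, and $f_{\ref{lem:pw-constell}}(|V(F)|,t)$ is inherited as the minimum value of $q$ for which the zigzagged outcome in \cite[Theorem~2.1]{istd18} is already forced. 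This is exactly the content of the footnote following \cref{lem:pw-constell}, and the verification is essentially bookkeeping once the list from~\cite{istd18} is unpacked.

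For the second task, outcomes~(b) and~(c) of \cite[Theorem~2.1]{istd18} produce a $d$-ample $(s',p)$-constellation in the original, weaker sense of $d$-ampleness, for some $s' := s + 2p$, that is either interrupted or $q$-zigzagged. Since both the interrupted and $q$-zigzagged properties are preserved under taking induced sub-constellations (as noted in \cref{subsec:constellations}), I would then apply \cref{lem:technical} to trim $S_{\cro}$ down to $s$ star vertices whose neighbors avoid the $d$-neighborhoods of the path extremities, yielding the strengthened $d$-ample $(s,p)$-constellation required by the statement.

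The main obstacle is, of course, the trichotomy \cite[Theorem~2.1]{istd18} itself, which is highly non-trivial and which I would import as a black box. Given that, the only delicate point in the reduction is checking that every substructure in outcome~(a) does have large enough biclique and forest-induced-minor content, at sufficiently large parameters, to be ruled out by our two hypotheses; the rest is a parameter-tracking argument and an application of \cref{lem:technical}.
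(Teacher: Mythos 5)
Your proposal matches the paper exactly: the paper gives no standalone proof but imports \cite[Theorem~2.1]{istd18} as a black box, notes in a footnote that the outcomes of item~(a) there are ruled out by the $K_{t,t}$-subgraph-free and $F$-induced-minor-free hypotheses, and invokes \cref{lem:technical} to pass to the strengthened notion of $d$-ampleness. Your two remaining tasks are precisely these two observations, so the approach is the same.
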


In other words, every weakly sparse class excluding some forest induced minor either has bounded pathwidth or admits as induced subgraphs arbitrarily large and ample constellations that are interrupted or $O(1)$-zigzagged. \cref{lem:pw-constell} is the key result used by Chudnovsky, Hajebi, and Spirkl~\cite{istd18} to describe the unavoidable induced subgraphs of graphs with large pathwidth.
Due to~\cref{lem:technical}, \cref{lem:pw-constell} holds with the slightly stronger notion of $d$-ampleness. 

\section{The Pohoata--Davies grid and the Death Star}\label{sec:constructions}

Given a~positive integer $n$, the $n \times n$ Pohoata--Davies grid is the graph obtained from the disjoint union of $n$ paths on $n$ vertices each, by adding for each $i \in [n]$, a~vertex adjacent to the $i$-th vertex of each path; see~\cref{fig:pohoata-grid}.
We call the vertices on the $n$ paths \emph{path vertices}, and the added vertices of degree~$n$ are the \emph{star vertices}.

\begin{lemma}\label{lem:npd}
  Let $H$ be any graph, and $A, B, C \subset V(H)$ be three non-empty sets such that for every $X \in \{A, B, C\}$,
  \begin{compactitem}
  \item $N_H[X]$ is disjoint from both sets in $\{A, B, C\} \setminus \{X\}$,
  \item $H[X]$ is connected and is \emph{not} a path, and
  \item the two sets in $\{A, B, C\} \setminus \{X\}$ are in the same connected component of~$H-N_H[X]$.
  \end{compactitem}
  Then $H$ is not an induced minor of any Pohoata--Davies grid.
\end{lemma}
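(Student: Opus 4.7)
The plan is to argue by contradiction: suppose $H$ is an induced minor of the $n \times n$ Pohoata--Davies grid $G_n$ via some model $\{M_v : v \in V(H)\}$, and for each $X \in \{A, B, C\}$ write $M_X := \bigcup_{v \in X} M_v$. Then $M_X$ is a connected induced subgraph of $G_n$ whose induced minor realizes $H[X]$, and by the first hypothesis the three sets $M_A, M_B, M_C$ are pairwise disjoint and pairwise non-adjacent in $G_n$.

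The first key step is to observe that each $M_X$ must contain at least one star vertex. Indeed, deleting all star vertices from $G_n$ leaves a disjoint union of $n$ paths, whose connected subgraphs are subpaths; and any induced minor of a path is a disjoint union of paths (contracting branch sets inside a path just shortens it). Since $H[X]$ is connected but not a path by hypothesis, $M_X$ cannot lie in this star-free subgraph. So one can pick $s_{j_X} \in M_X$ for each $X$, and disjointness of the branch sets forces the three indices to be distinct; relabel so that $j_A < j_B < j_C$.

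The second and crucial step is the structural observation that every column of $G_n$ is a separator: removing $s_{j_B}$ together with all path vertices in column $j_B$ splits $G_n$ into a ``left half'' (columns $< j_B$, together with their star vertices) and a ``right half'' (columns $> j_B$, together with theirs), with no edge crossing. I would then show that $M_A$ cannot meet column $j_B$ at all: it does not contain $s_{j_B} \in M_B$, and any path vertex of column $j_B$ is adjacent to $s_{j_B}$, so including it would produce an $M_A$--$M_B$ edge, contradicting $N_H[A] \cap B = \emptyset$. As $M_A$ is connected, avoids column $j_B$, and contains $s_{j_A}$ with $j_A < j_B$, it lies entirely in the left half; symmetrically, $M_C$ lies entirely in the right half.

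Finally, apply the third hypothesis with $X = B$: there is a path $v_1, \dots, v_k$ in $H$ from $v_1 \in A$ to $v_k \in C$ with no $v_i$ in $N_H[B]$. The union $W := \bigcup_i M_{v_i}$ is connected in $G_n$, and by the same argument as above (no $M_{v_i}$ can touch column $j_B$ without creating a forbidden edge with $M_B$), $W$ avoids column $j_B$ entirely. But $W$ meets $M_A$ in the left half and $M_C$ in the right half, contradicting the disconnection. The main difficulty is really just conceptual, namely recognizing that ``$H[X]$ is not a path'' forces a star vertex into $M_X$, and that a star vertex's column is a separator; once those two observations are in hand, the contradiction is immediate and the hypotheses of the lemma line up exactly.
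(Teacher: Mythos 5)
Your proof is correct and follows essentially the same route as the paper's: each of $M_A,M_B,M_C$ must capture a star vertex because the grid minus its star vertices is a disjoint union of paths, and the closed neighborhood of the middle star vertex (your ``column $j_B$'') separates the other two, contradicting the non-adjacency and connectivity hypotheses. You have simply spelled out in full the separator argument that the paper states in one line.
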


\begin{proof}
  Suppose, for the sake of contradiction, that there is an induced minor model of $H$ in a~Pohoata--Davies grid.
  By the second item, for every $X \in \{A, B, C\}$ at least one branch set of an element of $X$ contains at least one \emph{star} vertex, say,~$s_X$.
  Then one of $s_A, s_B, s_C$ has a~neighborhood whose removal disconnects the other two of~$s_A, s_B, s_C$.
  This is incompatible with the first or the third item.
\end{proof}

\Cref{fig:npd} illustrates \cref{lem:npd} with two minimal trees whose exclusion as induced minors in weakly sparse classes does not yield bounded treewidth, as the family of Pohoata--Davies grids subsists.
Note that when applying this lemma, one can equivalently restrict $A, B, C$ to each induce a~claw or a cycle (the minimal connected non-path graphs).

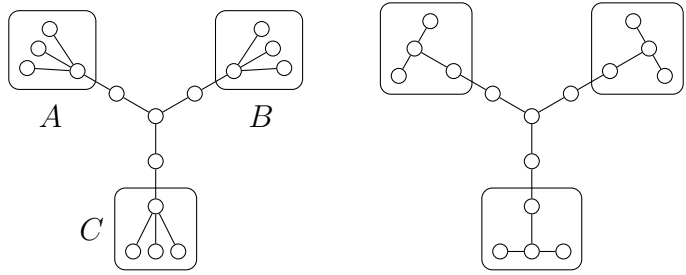
\begin{figure}[!ht]
  \centering
  \begin{tikzpicture}[vertex/.style={circle,draw,inner sep=2pt}]

  \begin{scope}
  \def\s{0.6}

  \node[vertex] (v) at (0,0) {} ;
  
  \foreach \a/\x in {30/1,150/2,270/3}{
    \begin{scope}[rotate=\a]
    \foreach \i/\j/\y in {1/0/1, 2/0/2, 3/0/3, 3/-0.5/4, 3/0.5/5}{
      \node[vertex] (v\x\y) at (\i * \s,\j * \s) {} ;
    }
    \draw (v) -- (v\x1) -- (v\x2) -- (v\x3) ;
    \draw (v\x4) -- (v\x2) -- (v\x5) ;
    \end{scope}
  }

  \node[draw, rounded corners, fit=(v12) (v13) (v14) (v15)] (A) {} ;
  \node[draw, rounded corners, fit=(v22) (v23) (v24) (v25)] (B) {} ;
  \node[draw, rounded corners, fit=(v32) (v33) (v34) (v35)] (C) {} ;

  \node at (-1.4,0) {$A$} ;
  \node at (1.4,0) {$B$} ;
  \node at (-0.85,-1.5) {$C$} ;
  \end{scope}

  \begin{scope}[xshift=5cm]
  \def\s{0.6}

  \node[vertex] (v) at (0,0) {} ;
  
  \foreach \a/\x in {30/1,150/2,270/3}{
    \begin{scope}[rotate=\a]
    \foreach \i/\j/\y in {1/0/1, 2/0/2, 3/0/3, 3/-0.7/4, 3/0.7/5}{
      \node[vertex] (v\x\y) at (\i * \s,\j * \s) {} ;
    }
    \draw (v) -- (v\x1) -- (v\x2) -- (v\x3) ;
    \draw (v\x4) -- (v\x3) -- (v\x5) ;
    \end{scope}
  }

  \node[draw, rounded corners, fit=(v12) (v13) (v14) (v15)] (A) {} ;
  \node[draw, rounded corners, fit=(v22) (v23) (v24) (v25)] (B) {} ;
  \node[draw, rounded corners, fit=(v32) (v33) (v34) (v35)] (C) {} ;
  \end{scope}

  \end{tikzpicture}
  \caption{Two minimal trees satisfying~\cref{lem:npd}, thus not being induced minors of any Pohoata--Davies grid.}
  \label{fig:npd}
\end{figure}

The death star of height $n$ is defined as follows.
The death star of height 1 is simply an edge, with one vertex forming the \emph{path} of the death star, and the other vertex being its unique \emph{star vertex}.
The death star of height $n$ is obtained from the death star of height $n-1$, by subdividing each edge of its path, adding a~vertex at each end, thereby forming the path of the next death star, and then adding an $n$-th star vertex adjacent to all the new vertices of the path; see~\cref{fig:death-star}.
Again, we call the vertices on the path of the death star \emph{path vertices}.

\begin{figure}[h!]
  \centering
\begin{tikzpicture}[scale=.4]
\def\k{5}
\pgfmathtruncatemacro\km{\k-1}
\def\mw{0.2}
\node[draw,circle,minimum width=\mw cm, inner sep=0.03cm] (u1) at (0,0) {} ;
\foreach \i in {2,...,\k}{
\pgfmathtruncatemacro\q{2^\i / 2}
\pgfmathtruncatemacro\qm{\q / 2 - 1/2}
\pgfmathtruncatemacro\kk{\k - \i + 1}
\pgfmathtruncatemacro\s{2^\kk}
\pgfmathtruncatemacro\l{\k - \i}
\pgfmathtruncatemacro\ss{2^\l - 1}
\foreach \j in {0,...,\qm}{
\node[draw,circle,minimum width=\mw cm, inner sep=0.03cm] (u\i-\j) at (- \j * \s - \ss - 1,0) {} ;
\node[draw,circle,minimum width=\mw cm, inner sep=0.03cm] (v\i-\j) at (\j * \s + \ss + 1,0) {} ;
}
}
\foreach \i in {1,...,\k}{
  \node[draw,circle,minimum width=\mw cm, inner sep=0.03cm] (w\i) at (0,2.5 + 2 * \i) {} ;
}

\pgfmathtruncatemacro\q{2^\k / 2 - 1}
\foreach \j in {-\q,...,\q}{
  \node[circle,minimum width=\mw cm, inner sep=0.03cm] (a\j) at (\j,0) {} ;
}

\pgfmathtruncatemacro\qp{-2^\k / 2 + 2}
\pgfmathtruncatemacro\qm{\q - 1}
\foreach \j [count = \jp from \qp] in {-\q,...,\qm}{
  \draw (a\j) -- (a\jp) ;
}

\draw (w1) -- (u1) ;
\foreach \i in {2,...,\k}{
\pgfmathtruncatemacro\q{2^\i / 2}
\pgfmathtruncatemacro\qm{\q / 2 - 1/2}
\foreach \j in {0,...,\qm}{
\draw (w\i) to [bend left = -5 * \i] (u\i-\j) ;
\draw (w\i) to [bend right = -5 * \i] (v\i-\j) ;
}
}
\end{tikzpicture}
\caption{The death star of height 5.}
\label{fig:death-star}
\end{figure}
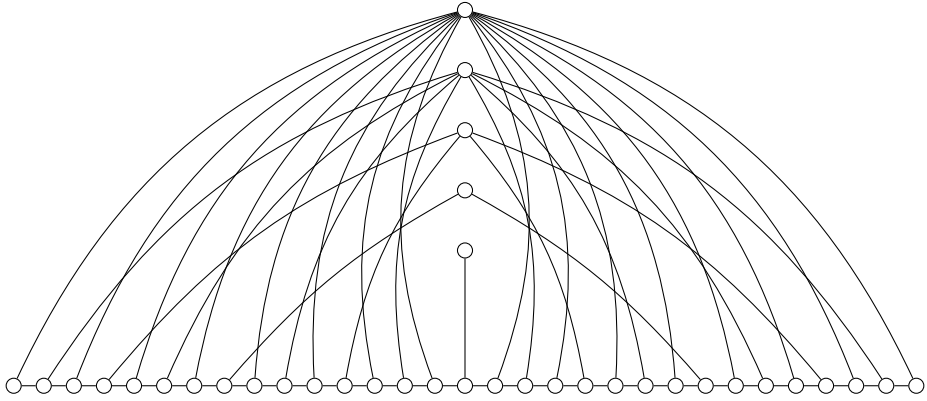

\begin{lemma}\label{lem:nds}
  Let $H$ be any graph, $A, B, C, D$ be four disjoint sets each inducing a~claw in~$H$, $P$ be an $A$--$B$ path, $Q$ be a~$C$--$D$ path such that
  \begin{compactitem}
  \item $V(P)$ and $N_H[C \cup D]$ are disjoint,
  \item $V(Q)$ and $N_H[A \cup B]$ are disjoint, and
  \item there is at most one edge between $A \cup B$ and $C \cup D$, and this edge (if it exists) is between a~leaf of the claws $A, B$ and a~leaf of the claws $C, D$.
  \end{compactitem}
  Then $H$ is not an induced minor of any death star.
\end{lemma}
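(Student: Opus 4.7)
I would argue by contradiction: suppose $H$ is an induced minor of some death star $D$, with induced minor model $\mathcal{M}=\{X_v:v\in V(H)\}$. For a subset $Y\subseteq V(H)$ write $\widetilde{Y} := \bigcup_{v\in Y} X_v$, and also set $X_P := \bigcup_{v\in V(P)}X_v$ and $X_Q := \bigcup_{v\in V(Q)}X_v$. I would freely restrict to $V(H)=A\cup B\cup C\cup D\cup V(P)\cup V(Q)$, so that every branch set participates in one of the two ``arms'' $T_1 := \widetilde A \cup X_P \cup \widetilde B$ or $T_2 := \widetilde C \cup X_Q \cup \widetilde D$.

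The first key step I would establish is that each of $\widetilde A,\widetilde B,\widetilde C,\widetilde D$ must contain at least one \emph{star} vertex of $D$. Indeed, $D[\widetilde Y]$ is connected (since $H[Y]$ is) and cannot be a path: any realization of a claw $K_{1,3}$ by branch sets inside a path would require the center's branch set to be adjacent to three pairwise-disjoint branch sets, but a subpath is adjacent in a path to at most two other disjoint subpaths. Since any connected induced subgraph of $D$ using only path-vertices \emph{is} a subpath, $\widetilde Y$ must contain a star vertex; call it $s_Y$. The four chosen $s_A,s_B,s_C,s_D$ are pairwise distinct by disjointness of branch sets.

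The second step is to translate the hypothesis about $P$, $Q$, and the at-most-one edge into $D$. From $V(P)\cap N_H[C\cup D]=\emptyset$ I deduce that no $D$-edge joins $X_P$ to $\widetilde C\cup\widetilde D$; symmetrically for $X_Q$. The ``at most one edge'' condition in $H$ between $A\cup B$ and $C\cup D$, when it exists, yields \emph{at most one} edge in $D$ between $\widetilde A\cup\widetilde B$ and $\widetilde C\cup\widetilde D$, and by the claw-center clause this single edge cannot touch the branch sets of the four claw centers. Consequently $T_1$ and $T_2$ are vertex-disjoint connected subgraphs of $D$ that are joined in $D$ by at most one edge, and this possible edge is ``far'' from the star vertices $s_A,s_B,s_C,s_D$ in the sense that it sits between two non-center branch sets.

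The third and main step is to exploit the hierarchical structure of $D$. Label the star vertices $s_1,\dots,s_n$ by level, and let $s^\star$ be the highest-level star vertex of $D$ contained in any branch set; WLOG $s^\star$ lies in $T_1$ (if it lies in $T_2$ swap roles). I would use the structural fact that every level-$k$ path vertex of $D$ with $k<n$ has both its path-neighbors at level exactly~$n$, so that removing $N_D[s^\star]$ chops the path into isolated pieces that are connected only through lower-level star vertices, yielding a disjoint union of stars $K_{1,2^{i-1}}$. I would then trace, inside $D-N_D[s^\star]$, where $T_2$ must sit. Because the single possible cross-edge cannot be incident to claw centers, the subgraph $\widetilde C\cup X_Q\cup \widetilde D$ minus its (at most one) leaf-to-leaf contact with $T_1$ is contained in $D-N_D[s^\star]$, and it still must contain the two claw-structures at $C$ and $D$ and the connecting path $Q$ in $H$. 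Iterating this ``peeling'' (repeatedly taking the highest star vertex of the residual model and stripping its closed neighborhood) forces $T_2$ to collapse into a structure too restricted to accommodate two claws on distinct star vertices joined by a path that avoids the other claws' neighborhoods, giving the contradiction.

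The main obstacle will be the last step: while removing $N_D[s^\star]$ cleanly fragments the path vertices, the argument must carefully track the claw-centers and the single allowed cross-edge, since this edge can thread $T_2$ back toward $T_1$ in delicate ways. Handling the case where $s^\star$ sits in $X_P$ (rather than in $\widetilde A \cup \widetilde B$) will also need extra attention, as well as the possibility that some claw's ``star vertex of $\widetilde Y$'' is actually one of $Y$'s leaf branch sets rather than its center; the hypothesis that the cross-edge avoids claw centers is precisely what makes the separation argument go through.
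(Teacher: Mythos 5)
Your first two steps are correct and match the opening of the paper's proof: each of $\widetilde A,\widetilde B,\widetilde C,\widetilde D$ must contain a star vertex because a claw cannot be modelled by subpaths of the death star's path, and the hypotheses on $P$, $Q$ and the cross-edge transfer to the branch sets (one small inaccuracy: a single edge of $H$ between $A\cup B$ and $C\cup D$ does not bound by one the number of death-star edges between $\widetilde A\cup\widetilde B$ and $\widetilde C\cup\widetilde D$; it only forces all of them to run between the two branch sets of the endpoints of that edge, which is the form in which the condition is actually used). The first genuine gap is the structural fact you lean on. Deleting $N[s^\star]$ shatters the death star into a disjoint union of stars only when $s^\star$ is the top star vertex $s_n$ of the \emph{whole} death star, and the model need not use $s_n$. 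If $s^\star=s_k$ with $k<n$, then $N[s_k]$ consists of $s_k$ and the level-$k$ path vertices only; the path breaks into long segments, each still adjacent to every star vertex $s_m$ with $m>k$, and the complement of $N[s_k]$ is connected --- nothing is separated. What is true, and what the argument needs, is a relative statement: a connected subgraph that avoids $N[s_k]$ \emph{and contains no star vertex of level greater than $k$} contains at most one star vertex. This is exactly why the distinguished star vertex must be chosen highest over \emph{all} branch sets (so that the $s_C$--$s_D$ connector inside $\widetilde C\cup X_Q\cup\widetilde D$ has no higher star vertex to detour through); your ``disjoint union of stars'' picture must be replaced by this relative separation.

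The second, larger gap is that the heart of the proof is left as a hope. Granting the separation, the $s_C$--$s_D$ path inside $\widetilde C\cup X_Q\cup\widetilde D$ must meet $N(s_A)$; since $X_Q$ is non-adjacent to $\widetilde A\cup\widetilde B$ and the only possible $T_1$--$T_2$ adjacency is between two leaf branch sets, this pins everything down: $s_A$ lies in a leaf branch set $Y$ of $y\in A$, $s_C$ in the leaf branch set $Z$ of some leaf $z$ of $C$, $yz$ is the unique cross-edge, and $z\notin V(Q)$ (because $z\in N_H[A]$ while $V(Q)\cap N_H[A\cup B]=\emptyset$). The contradiction then comes from a specific analysis of the three remaining branch sets $Z_1,Z_2,Z_3$ of the claw $C$, with $Z_2$ the center: none may contain a star vertex (else it reaches $s_D$ through the branch sets of $(C\setminus\{z\})\cup V(Q)\cup D$ while avoiding $N[s_A]$), so they are three consecutive subpaths with $Z_2$ in the middle; a middle subpath has no neighbors outside its two flanking subpaths other than star vertices, so realizing the edge between $Z_2$ and $Z$ forces a star vertex of $Z$ adjacent to $Z_2$, which again reaches $s_D$ avoiding $N[s_A]$ --- contradiction. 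None of this is in your proposal; ``iterating the peeling'' is not a substitute and it is unclear it even makes sense, since after removing $N[s^\star]$ the residual graph is no longer a death star and no induction hypothesis is available. You correctly identify the difficulties (the single cross-edge, star vertices sitting in leaf branch sets), but the proposal does not resolve them, so as written it is not a proof.
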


\begin{proof}
  Suppose for the sake of contradiction that there is some integer $n$ such that the death star of height~$n$ admits an induced minor model of~$H$.
  Since for every $X \in \{A,B,C,D\}$, $H[X]$ is a~claw (which is not a~path), at least one of the four branch sets representing~$X$ has to contain a~star vertex $s_X$.
  Without loss of generality, we can assume that $A$ has the \emph{highest} (i.e., the one added last in the inductive construction) star vertex, which we now denote by $s_A$.
  The connected set $C \cup D \cup V(Q)$ in~$H$ yields some $s_C$--$s_D$ path in the death star.
  However, by design of the death star, there is no $s_C$--$s_D$ path that avoids the neighborhood of~$s_A$.
  The only way to reconcile this with the second and third items of the lemma is that $s_A$ is in a~leaf branch set $Y$ of the claw $A$ corresponding to $y \in V(H)$, and one of $s_C, s_D$, say without loss of generality, $s_C$, is in a~leaf branch set $Z$ of the claw $C$ corresponding to $z \in V(H)$, $yz$ is the unique edge of~$H$ between $A \cup B$ and $C \cup D$, and $V(Q) \cap Z = \emptyset$.

  Let $Z_1, Z_2, Z_3$ be the three branch sets of $C$ other than~$Z$, and let $Z_2$ correspond to the center of claw~$C$.
  There cannot be a~star vertex $s$ in $Z_1 \cup Z_2 \cup Z_3$, since otherwise there would be an $s$--$s_D$ path avoiding the neighborhood of~$s_A$.
  It means that $Z_1, Z_2, Z_3$ are three consecutive (disjoint) subpaths of the death star's path, with $Z_2$ being in between $Z_1$ and~$Z_3$.
  The only neighbors of $Z_2$ outside of $Z_1 \cup Z_3$ are star vertices, so to realize the edge between $Z_2$ and $Z$, there should be a~star vertex $s$ in $Z$ that is adjacent to $Z_2$ (be it, $s_C$ or some other star vertex).
  But then this again implies an $s$--$s_D$ path avoiding the neighborhood of~$s_A$ since the path $Q$ enters $C$ in $Z_1 \cup Z_2 \cup Z_3$ (note also that the star vertices form an independent set); a~contradiction.
\end{proof}

Note that we do not need to require that $V(P)$ and $V(Q)$ are disjoint or non-adjacent.
\Cref{fig:nds} illustrates \cref{lem:nds} with two minimal trees, one for which $A \cup B$ and $C \cup D$ are non-adjacent (left), and one where a~unique allowed edge links these two sets (right).

\begin{figure}[!ht]
  \centering
  \begin{tikzpicture}[vertex/.style={circle,draw,inner sep=2pt}]
    \def\s{0.6}
    \def\h{1.5}

    \begin{scope}
    \foreach \i in {1,...,5}{
      \node[vertex] (a\i) at (\i * \h, 0) {} ;
    }
    \draw (a1) -- (a2) -- (a3) -- (a4) -- (a5) ;

    \foreach \i in {1,2,4,5}{
      \foreach \j in {1,2,3}{
        \node[vertex] (b\i\j) at (\i * \h + 0.3 * \j * \h - 0.6 * \h, -\h) {} ;
        \draw (a\i) -- (b\i\j) ;
      }
    }

    \foreach \i in {1,2,4,5}{
      \node[draw,rounded corners,fit=(a\i) (b\i1) (b\i2) (b\i3)] {} ;
    }

    \foreach \i in {1,2,4,5}{
      \node[very thick,vertex] at (\i * \h, 0) {} ;
    }
    \draw[very thick] (a1) -- (a2) ;
    \draw[very thick] (a4) -- (a5) ;

    \node at (\h,-1.35 * \h) {$A$} ;
    \node at (2 * \h,-1.35 * \h) {$B$} ;
    \node at (4 * \h,-1.35 * \h) {$C$} ;
    \node at (5 * \h,-1.35 * \h) {$D$} ;
    
    \node at (1.5 * \h,0.35 * \h) {$P$} ;
    \node at (4.5 * \h,0.35 * \h) {$Q$} ;
    \end{scope}

    \begin{scope}[xshift=9cm]
    \foreach \i in {1,...,10}{
      \node[vertex] (a\i) at (\i * \s, 0) {} ;
    }
    \draw (a1) -- (a2) -- (a3) -- (a4) -- (a5) -- (a6) -- (a7) -- (a8) -- (a9) -- (a10);

    \foreach \i/\k in {1/0.5,2/1.5,4/4,7/7,10/9.5,11/10.5}{
      \node[vertex] (b\i) at (\k * \s, -\h) {} ;
    }
    
    \draw (b1) -- (a1) -- (b2) ;
    \draw (a4) -- (b4) ;
    \draw (a7) -- (b7) ;
    \draw (b10) -- (a10) -- (b11) ;

    \node[draw,rounded corners,fit=(a1) (a2) (b1) (b2)] {} ;
    \node[draw,rounded corners,fit=(a3) (a4) (a5) (b4)] {} ;
    \node[draw,rounded corners,fit=(a6) (a7) (a8) (b7)] {} ;
    \node[draw,rounded corners,fit=(a9) (a10) (b10) (b11)] {} ;

    \foreach \i in {2,3,8,9}{
      \node[very thick,vertex] at (\i * \s, 0) {} ;
    }
    \draw[very thick] (a2) -- (a3) ;
    \draw[very thick] (a8) -- (a9) ;

    \node at (1.2 * \s,-3.35 * \s) {$A$} ;
    \node at (4 * \s,-3.35 * \s) {$B$} ;
    \node at (7 * \s,-3.35 * \s) {$C$} ;
    \node at (9.8 * \s,-3.35 * \s) {$D$} ;

    \node at (2.5 * \s,0.85 * \s) {$P$} ;
    \node at (8.5 * \s,0.85 * \s) {$Q$} ;
    \end{scope}
    
  \end{tikzpicture}
  \caption{Two minimal trees satisfying~\cref{lem:nds}, thus not being induced minors of any death star.
    Left: no edge between $A \cup B$ and $C \cup D$.
    Right: one permitted (unique) edge between $A \cup B$ and $C \cup D$, namely between $B$ and $C$.
    It is indeed incident to two claw leaves outside $V(P) \cup V(Q)$.}
  \label{fig:nds}
\end{figure}
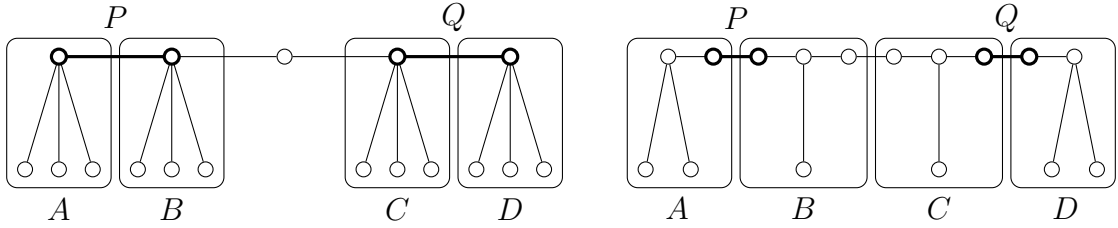

The families of Pohoata--Davies grids and death stars are weakly sparse: the former has girth~6 and the latter has no $K_{2,3}$ subgraph.
Note that the $n \times n$ Pohoata--Davies grid is a~$(n,n)$-constellation.
In particular, it admits the biclique $K_{n,n}$ as an (induced) minor, thus has treewidth at~least~$n$.
The death star of height $n$ has treewidth $\Theta(n)$~\cite{Bonamy24}.
Subdividing once every edge on the path of the death star of height~$n$, we get a~\emph{$K_{2,2}$}-subgraph-free construction with the same properties as those described in this section. 
Hence we get the following.

\begin{lemma}\label{lem:nega}
  Every graph that satisfies the preconditions of~\cref{lem:npd} or \cref{lem:nds} is \nega.
\end{lemma}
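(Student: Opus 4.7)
The plan is to derive the lemma directly from \cref{lem:npd} and \cref{lem:nds} together with the properties of the Pohoata--Davies grids and (subdivided) death stars recorded in this section. To show that a graph $H$ is \nega it suffices to exhibit a single weakly sparse graph class of unbounded pathwidth that excludes $H$ as an induced minor.

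If $H$ satisfies the preconditions of \cref{lem:npd}, I would take as witness the family $(G_n)_{n \geqslant 1}$ of Pohoata--Davies grids. By \cref{lem:npd}, $H$ is not an induced minor of any $G_n$. This family has girth $6$, hence is $K_{2,2}$-subgraph-free, so it is weakly sparse. Moreover $G_n$ contains $K_{n,n}$ as an induced minor, which makes its pathwidth unbounded (using that pathwidth is always at least treewidth, and that $\tw(K_{n,n})$ grows with $n$). Hence $H$ is \nega.

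If instead $H$ satisfies the preconditions of \cref{lem:nds}, I would use the class obtained by subdividing once every path edge of the death star of height $n$, for every~$n$. As observed at the end of the section, this family is $K_{2,2}$-subgraph-free and still has treewidth (and therefore pathwidth) $\Theta(n)$, so it is a weakly sparse class of unbounded pathwidth. The argument of \cref{lem:nds} applies verbatim to this subdivided variant, so $H$ is not an induced minor of any graph in the class. This yields a second witnessing family, so $H$ is \nega.

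There is no substantive obstacle here: both preceding lemmas have already done the combinatorial work of forbidding $H$ as an induced minor, and the weak sparsity and unbounded pathwidth of the two families have been established. The only point to double-check is that subdividing the path edges of a death star does not break the proof of~\cref{lem:nds}, but that argument reasons only about star vertices and the positions of path-vertex subpaths separating them, both of which are preserved by a single subdivision of each path edge.
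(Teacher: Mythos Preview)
Your proposal is correct and matches the paper's argument essentially verbatim: the paper observes that the Pohoata--Davies grids (girth~$6$, hence $K_{2,2}$-subgraph-free, with $K_{n,n}$ induced minors giving unbounded treewidth) and the death stars (no $K_{2,3}$ subgraph, treewidth $\Theta(n)$, or their once-subdivided $K_{2,2}$-free variant) are weakly sparse classes of unbounded pathwidth excluding~$H$ as an induced minor, and concludes. Your remark that the proof of \cref{lem:nds} survives subdividing path edges is exactly the ``same properties'' the paper asserts without further comment.
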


\section{Negative Forests}\label{sec:negative-forests}

In this section, we leverage \cref{lem:nega} to show that any forest that is not an induced minor of any $T_{2,\ell} \cup \ell T_{1,\ell}$ or $T_{3,\ell} \cup \ell T_{1,\ell}$ is negative.

Let $T_1$ be a~countably infinite tree defined as follows.
The root $r$ of $T_1$ has countably infinitely many children, each having exactly two children.
All the nodes of $V(T_1) \setminus N[r]$ have degree exactly 2: they have one parent and one child.
Let $\mathcal T_1$ be the set of finite connected induced subgraphs (or equivalently, induced minors) of $T_1$.
In particular, $\mathcal T_1$ is an infinite family of trees.

We also define a~family of finite trees containing every tree of~$T_1$ as induced subgraph.
For any positive integer $\ell$, let $T_{1,\ell}$ be the tree obtained from the disjoint union of $\ell$ paths on $2\ell$ edges, by adding a~vertex (the root) adjacent to the central vertex on every path; see~\cref{fig:T1l}.
The rooted tree $T_{1,\ell}$ is an important building block in what follows.

\begin{figure}[!ht]
  \centering
  \begin{tikzpicture}[vertex/.style={draw,circle,inner sep=0.06cm}]
    \def\s{1.3}
    \def\t{0.4}
    \def\l{5}
    \pgfmathsetmacro{\h}{1/\l)}
    \pgfmathtruncatemacro{\lm}{\l-1}
    \pgfmathtruncatemacro{\lmm}{\lm-1}
    
    \foreach \i/\j/\la in {0/0/r, -2/-1/z1,-1/-1/z2,0/-1/z3,1/-1/z4,2/-1/z5}{
      \node[vertex] (\la) at (\i * \s,\j * \s) {} ;
    }

    \foreach \i in {1,...,\l}{
      \draw (r) -- (z\i) ;
    }

    \foreach \i in {1,...,\l}{
      \node[vertex] (a\i) at (-3.35 * \s + \i * \s, -4) {} ;
      \node[vertex] (b\i) at (-2.65 * \s + \i * \s, -4) {} ;
      \foreach \p in {1,...,\lm}{
        \path (z\i) to node[vertex, pos=\p * \h] (a\i-\p) {} (a\i) ;
        \path (z\i) to node[vertex, pos=\p * \h] (b\i-\p) {} (b\i) ;
      }
      \draw (z\i) -- (a\i-1) ;
      \draw (z\i) -- (b\i-1) ;
      \foreach \p [count = \pp from 2] in {1,...,\lmm}{
        \draw (a\i-\p) -- (a\i-\pp) ;
        \draw (b\i-\p) -- (b\i-\pp) ;
      }
      \draw (a\i) -- (a\i-\lm) ;
      \draw (b\i) -- (b\i-\lm) ;
    }

    \begin{scope}[xshift=6.5cm, yshift=-1.5cm]
      \draw (0,0) -- (2 * \t,-1 * \t) -- (2.5 * \t,-3.5 * \t) -- (-2.5 * \t,-3.5 * \t) -- (-2 * \t, -1 * \t) -- cycle ;
      \node at (0,-2 * \t) {$T_{1,\ell}$} ;
    \end{scope}
  \end{tikzpicture}
  \caption{$T_{1,\ell}$ for $\ell=5$ (left) and its shorthand representation (right).}
  \label{fig:T1l}
\end{figure}
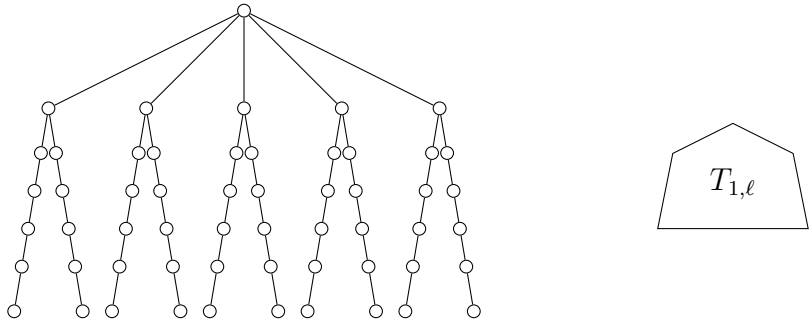

\begin{lemma}\label{lem:2-vd-claws}
  Any tree that is not in $\mathcal T_1$ admits two vertex-disjoint claws (linked by a~path).
\end{lemma}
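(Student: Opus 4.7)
The plan is to prove the contrapositive: if a finite tree $T$ admits no two vertex-disjoint claws, then $T \in \mathcal{T}_1$. Since a claw in a tree is precisely a vertex of degree at least $3$ together with three of its neighbors, let $S := \{v \in V(T) : \deg_T(v) \geqslant 3\}$ denote the set of \emph{branching} vertices. I will use the following structural characterization of $\mathcal{T}_1$, obtained by unrolling the recursive definition of $T_1$: a finite tree $T$ belongs to $\mathcal{T}_1$ if and only if there exists $r' \in V(T)$ such that every neighbor of $r'$ has degree at most $3$ in $T$ and every vertex at distance at least $2$ from $r'$ has degree at most $2$. Here $r'$ plays the role of the root of $T_1$, branching neighbors of $r'$ play the role of the $c_i$'s (of degree $3$ in $T_1$), and any remaining tails are paths that route into the infinite rays of $T_1$.

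Next I extract constraints on $S$ from the absence of two vertex-disjoint claws, using that in a tree two vertices at distance $2$ share exactly one common neighbor while two vertices at distance $\geqslant 3$ share none. For distinct $u, v \in S$, a short case analysis yields: (i) $\dist_T(u,v) \leqslant 2$, since at distance $\geqslant 3$ the closed neighborhoods are disjoint and disjoint claws follow immediately; (ii) if $\dist_T(u,v) = 2$ then $\deg(u) = \deg(v) = 3$, for otherwise one of them has at least four neighbors and can avoid the unique common neighbor while the other keeps it; and (iii) if $u, v$ are adjacent then $\min(\deg(u), \deg(v)) = 3$, for otherwise both have at least four neighbors and we can pick disjoint claws avoiding each other's center.

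By (i), $S$ has diameter at most $2$; so the closed neighborhoods $\{N_T[v] : v \in S\}$ are subtrees (stars) of $T$ that pairwise intersect, and the Helly property for subtrees of a tree produces a common vertex $r^* \in \bigcap_{v \in S} N_T[v]$, i.e., $S \subseteq N_T[r^*]$. I will set $r' := r^*$ in almost every case: vertices at distance $\geqslant 2$ from $r^*$ automatically lie outside $S$ and so have degree $\leqslant 2$; and branching neighbors of $r^*$ have degree exactly $3$ whenever either $r^* \notin S$ (then $|S| \leqslant \deg(r^*) \leqslant 2$, and if $|S| = 2$ the two branching vertices are at distance $2$ via $r^*$ so (ii) applies) or $r^* \in S$ with $|S| \geqslant 3$ (then $r^*$ has at least two branching neighbors, pairwise at distance $2$ via $r^*$, so (ii) applies). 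The single remaining boundary case is $|S| = 2$ with $r^* \in S$ and the unique branching neighbor $p$ of $r^*$ having degree at least $4$: then (iii) forces $\deg(r^*) = 3$, and I instead take $r' := p$, which makes $r^*$ the only branching neighbor of $r'$, of degree $3$. The ``linked by a path'' aspect of the conclusion is automatic in any connected tree.

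I expect the main obstacle to be rigorously justifying the structural characterization of $\mathcal{T}_1$ (in particular handling the degenerate subtrees of $T_1$ that do not contain the root $r$, which must be paths and can then be embedded by choosing $r'$ at an endpoint) and carefully enumerating the boundary sub-case in the last paragraph without missing configurations.
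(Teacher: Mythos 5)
Your proposal is correct and takes a genuinely different route from the paper's. The paper roots $T$ arbitrarily, picks a claw whose center $u$ is deepest (preferring, among deepest ones, a claw whose three leaves are all children of $u$), and splits on whether the third leaf is the parent or a child of $u$: in each case either a second claw disjoint from the first is found in the part of $T$ above $u$, or every component of $T$ minus a single vertex is a path --- which is the paper's (implicit) membership criterion for $\mathcal T_1$. You instead argue globally on the set $S$ of branching vertices: your constraints (i)--(iii) are all correct (the disjointness checks reduce to the absence of triangles and of two internally disjoint paths between two vertices of a tree), the Helly property legitimately yields $S \subseteq N[r^\ast]$, and your degree characterization of $\mathcal T_1$ is equivalent to the paper's ``all components of $T - r'$ are paths'' criterion; both proofs ultimately need to justify such a characterization from the definition of $T_1$, and your justification sketch (the component of $T_1-r$ containing $c_i$ is a two-way infinite path, so root-free induced subtrees are paths) is the right one. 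What your route buys is a sharper structural conclusion --- every branching vertex lies in $N[r^\ast]$ and has degree exactly $3$ except possibly at two adjacent special vertices --- at the cost of a longer case analysis; the paper's deepest-claw argument is more economical. One small patch: your case enumeration as written only guarantees that branching neighbours of $r^\ast$ have degree $3$ when $|S|=2$ (with $r^\ast\notin S$) or $|S|\geqslant 3$; when $|S|=1$ the Helly vertex $r^\ast$ may be a non-branching neighbour of the unique $s\in S$, which can have degree $\geqslant 4$, so one should simply take $r' := s$ there (and any $r'$ when $S=\emptyset$, where $T$ is a path). This is a triviality, not a gap in the method.
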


\begin{proof}
  Note that there is a~path between any two vertex-disjoint sets in any tree, simply because trees are connected.
  Thus we only need to find two vertex-disjoint claws in any tree $T \notin \mathcal T_1$.

  Root $T$ at an arbitrary vertex.
  There should be a~claw in $T$, as otherwise $T$ is a~path, which contradicts $T \notin \mathcal T_1$.
  Now consider a~claw of~$T$ whose center is deepest in the rooted~$T$, and subject to that, pick if possible a~claw where all three leaves are children of its center.
  We denote its center by $u$, its three leaves by $u_1, u_2, u_3$, where $u_1$ and $u_2$ are children of~$u$ in~$T$, and $U := \{u,u_1,u_2,u_3\}$.
  There are two cases: $u_3$ is the parent of~$u$ or $u_3$ is a~child of~$u$ in~$T$.

  Let us analyze the former case.
  By how the claw is picked, it then holds that $u_1, u_2$ are the only two children of~$u$ in~$T$.
  All the descendants of $u_1, u_2$, themselves included, have at most one child.
  Otherwise there would be a~deeper claw.
  Now, if $T-\{u_3\}$ has a~connected component that is not a~path, we have found our two vertex-disjoint claws.
  But if all the connected components of $T-\{u_3\}$ are paths, then $T \in \mathcal T_1$, a~contradiction.

  We turn to the case when $u_3$ is a~child of~$u$ in the rooted~$T$.
  As $u$ is a~deepest claw center, the connected components of $T-\{u\}$ with descendants of~$u$ are all paths.
  The connected component of $T-\{u\}$ without descendants of~$u$ cannot be a~path (and has to exist) since otherwise $T$ would be in~$\mathcal T_1$.
  This yields a~claw not intersecting~$U$.
\end{proof}

\begin{lemma}\label{lem:2-not-in-T1}
  Let $F$ be a~forest with at least two connected components that are not in $\mathcal T_1$.
  Then $F$ is \nega.
\end{lemma}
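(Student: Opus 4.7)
The plan is to reduce directly to \cref{lem:nega} via the death star obstruction \cref{lem:nds}. Let $T_A$ and $T_B$ denote two connected components of $F$ that are not in $\mathcal T_1$. Applying \cref{lem:2-vd-claws} to each of them yields two vertex-disjoint claws $A, B \subseteq V(T_A)$ together with an $A$--$B$ path $P$ contained in $T_A$ (since $T_A$ is connected), and similarly two vertex-disjoint claws $C, D \subseteq V(T_B)$ together with a $C$--$D$ path $Q$ contained in $T_B$.

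Next, I will verify that the quadruple $(A,B,C,D)$, with the paths $P$ and $Q$, satisfies the three bullet points of \cref{lem:nds}. Pairwise disjointness of the four claws is immediate: $A$ and $B$ are disjoint by construction, $C$ and $D$ are disjoint by construction, and $(A \cup B) \cap (C \cup D) = \emptyset$ because the two pairs live in different connected components of $F$. For the first bullet, $V(P) \subseteq V(T_A)$ while $N_F[C \cup D] \subseteq V(T_B)$ (as $T_B$ is a component), so these vertex sets are disjoint; the symmetric argument handles the second bullet. For the third bullet, the fact that $T_A$ and $T_B$ are distinct components of $F$ means there are \emph{zero} edges between $A \cup B$ and $C \cup D$, which is trivially at most one, and the vacuous implication on claw leaves is satisfied.

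Thus $F$ satisfies the preconditions of \cref{lem:nds}, and \cref{lem:nega} yields that $F$ is \nega, completing the proof.

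I do not anticipate a genuine obstacle: the argument is essentially bookkeeping on top of \cref{lem:2-vd-claws}. The only subtlety worth double-checking is that \cref{lem:nds} does not require $V(P)$ or $V(Q)$ to avoid the claws themselves (only the neighborhoods of the \emph{other} pair of claws), and it does not require $V(P)$ and $V(Q)$ to be disjoint—both of which are explicitly noted in the excerpt just after \cref{lem:nds}. This is what makes the two-component case so clean: the disjointness between the two host trees gives every separation condition for free.
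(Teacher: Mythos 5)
Your proof is correct and follows exactly the paper's argument: apply \cref{lem:2-vd-claws} to each of the two components outside $\mathcal T_1$ to obtain the four claws and the two paths, observe that the separation conditions of \cref{lem:nds} hold for free because the two pairs lie in distinct components, and conclude via \cref{lem:nega}. The extra bookkeeping you spell out (and the remark that $V(P)$, $V(Q)$ need not be disjoint) is consistent with the paper's proof, which is just a terser version of the same reasoning.
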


\begin{proof}
  Let $T, T'$ be connected components of~$F$ that are not in~$\mathcal T_1$.
  By~\cref{lem:2-vd-claws}, there is an $A$--$B$ path $P$ in $T$ and a~$C$--$D$ path $Q$ in~$T'$ with four vertex-disjoint claws $A, B$ (in $T$) and $C,D$ (in $T'$).
  As $P$ and $C,D$ (resp.~$Q$ and $A,B$) are in distinct connected components of~$F$, the preconditions of~\cref{lem:nds} are met.
  We thus conclude by~\cref{lem:nega}.
\end{proof}

Fix an integer~$\ell \geqslant 2$.
We start by defining the tree $T'_{2,\ell}$.
First subdivide $\ell+1$ times two edges of a~star $K_{1,\ell+2}$; the center of the star is the root of $T'_{2,\ell}$.
Call the leaves in this tree \emph{original}.
Let $x$ and $y$ denote the non-leaf children of the root.	
From each parent of the two leaves non-adjacent to the root and from the child of $x$, we add an $\ell$-edge path (of descendants).
This finishes the construction of~$T'_{2,\ell}$.
It has $\ell+5$ leaves and $6\ell+5$ nodes.
The tree $T'_{3,\ell}$ is very similar to $T'_{2,\ell}$ (in particular, it has the same number of leaves and nodes):
The $\ell$-edge path from the child of $x$ is replaced by an $\ell$-edge path from $y$.

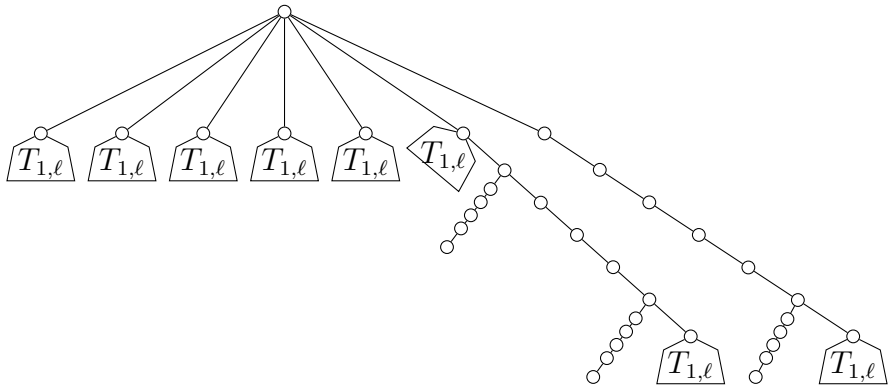
\begin{figure}[!ht]
  \centering
  \begin{tikzpicture}[vertex/.style={draw,circle,inner sep=0.06cm},scale=0.9]
    \def\s{1.2}
    \def\t{0.2}
    \def\l{5}
    \pgfmathsetmacro{\h}{1/(\l+1))}
    \pgfmathsetmacro{\hp}{1/\l)}
    \pgfmathtruncatemacro{\lm}{\l-1}
    \pgfmathtruncatemacro{\lmm}{\lm-1}
    
    \foreach \i/\j/\l in {0/0/r, -3/-1.5/z1,-2/-1.5/z2,-1/-1.5/z3,0/-1.5/z4,1/-1.5/z5, 2.2/-1.5/z6,3.2/-1.5/z7}{
      \node[vertex] (\l) at (\i * \s,\j * \s) {} ;
    }

    \foreach \i in {1,...,7}{
      \draw (r) -- (z\i) ;
    }

    \foreach \i in {1,...,5}{
      \draw (z\i) --++ (2 * \t,-1 * \t) --++ (0.5 * \t,-2.5 * \t) --++ (-5 * \t,0) --++ (0.5 * \t, 2.5 * \t) -- (z\i) ;
      \node at ([yshift=-0.45cm]z\i) {$T_{1,\ell}$};
    }

    \node[vertex] (x) at (5 * \s,-4 * \s) {} ;
    \foreach \p in {1,...,\l}{
        \path (z6) to node[vertex, pos=\p * \h] (a\p) {} (x) ;
    }
    \node[vertex] (y) at (7 * \s,-4 * \s) {} ;
    \foreach \p in {1,...,\l}{
        \path (z7) to node[vertex, pos=\p * \h] (b\p) {} (y) ;
    }

    \draw (z6) -- (a1) ;
    \draw (z7) -- (b1) ;
    \foreach \p [count = \pp from 2] in {1,...,\lm}{
      \draw (a\p) -- (a\pp) ;
      \draw (b\p) -- (b\pp) ;
    }
    \draw (a\l) -- (x) ;
    \draw (b\l) -- (y) ;

    \foreach \i in {x,y}{
      \draw (\i) --++ (2 * \t,-1 * \t) --++ (0.5 * \t,-2.5 * \t) --++ (-5 * \t,0) --++ (0.5 * \t, 2.5 * \t) -- (\i) ;
      \node at ([yshift=-0.45cm]\i) {$T_{1,\ell}$};
    }

    \begin{scope}[rotate=-40]
    \foreach \i in {z6}{
      \draw (\i) --++ (2 * \t,-1 * \t) --++ (0.5 * \t,-2.5 * \t) --++ (-5 * \t,0) --++ (0.5 * \t, 2.5 * \t) -- (\i) ;
      \node at ([yshift=-0.45cm]\i) {$T_{1,\ell}$};
    }
    \end{scope}

    \node[vertex] (xp) at (3.8 * \s, -4.5 * \s) {} ;
    \foreach \p in {1,...,\lm}{
        \path (a5) to node[vertex, pos=\p * \hp] (c\p) {} (xp) ;
    }

    \node[vertex] (yp) at (5.8 * \s, -4.5 * \s) {} ;
    \foreach \p in {1,...,\lm}{
        \path (b5) to node[vertex, pos=\p * \hp] (d\p) {} (yp) ;
    }

    \node[vertex] (zp) at (2 * \s, -2.9 * \s) {} ;
    \foreach \p in {1,...,\lm}{
        \path (a1) to node[vertex, pos=\p * \hp] (e\p) {} (zp) ;
    }

    \draw (a5) -- (c1) ;
    \draw (b5) -- (d1) ;
    \draw (a1) -- (e1) ;
    \foreach \p [count = \pp from 2] in {1,...,\lmm}{
      \draw (c\p) -- (c\pp) ;
      \draw (d\p) -- (d\pp) ;
      \draw (e\p) -- (e\pp) ;
    }
    \draw (c4) -- (xp) ;
    \draw (d4) -- (yp) ;
    \draw (e4) -- (zp) ;
  \end{tikzpicture}
  \caption{The tree $T_{2,\ell}$ for $\ell=5$ (and $T'_{2,\ell}$ by keeping only the root from each copy of $T_{1,\ell}$).}
  \label{fig:T2l}
\end{figure}

We can now define $T_{2,\ell}$ and $T_{3,\ell}$.
The tree $T_{2,\ell}$ (resp.~$T_{3,\ell}$) is obtained by adding $\ell$ children to $x$ and to each original leaf of~$T'_{2,\ell}$ (resp.~$T'_{3,\ell}$), and for each new vertex, adding a~$(2\ell+1)$-vertex path centered at this vertex.
In other words, we glue a~copy of $T_{1,\ell}$ at $x$ and each original leaf, identifying it with the root of the $T_{1,\ell}$; see~\cref{fig:T2l,fig:T3l}.

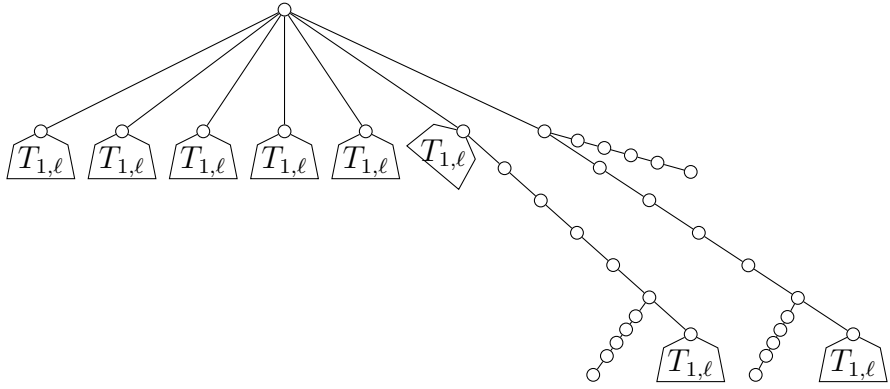
\begin{figure}[!ht]
  \centering
  \begin{tikzpicture}[vertex/.style={draw,circle,inner sep=0.06cm},scale=0.9]
    \def\s{1.2}
    \def\t{0.2}
    \def\l{5}
    \pgfmathsetmacro{\h}{1/(\l+1))}
    \pgfmathsetmacro{\hp}{1/\l)}
    \pgfmathtruncatemacro{\lm}{\l-1}
    \pgfmathtruncatemacro{\lmm}{\lm-1}
    
    \foreach \i/\j/\l in {0/0/r, -3/-1.5/z1,-2/-1.5/z2,-1/-1.5/z3,0/-1.5/z4,1/-1.5/z5, 2.2/-1.5/z6,3.2/-1.5/z7}{
      \node[vertex] (\l) at (\i * \s,\j * \s) {} ;
    }

    \foreach \i in {1,...,7}{
      \draw (r) -- (z\i) ;
    }

    \foreach \i in {1,...,5}{
      \draw (z\i) --++ (2 * \t,-1 * \t) --++ (0.5 * \t,-2.5 * \t) --++ (-5 * \t,0) --++ (0.5 * \t, 2.5 * \t) -- (z\i) ;
      \node at ([yshift=-0.45cm]z\i) {$T_{1,\ell}$};
    }

    \node[vertex] (x) at (5 * \s,-4 * \s) {} ;
    \foreach \p in {1,...,\l}{
        \path (z6) to node[vertex, pos=\p * \h] (a\p) {} (x) ;
    }
    \node[vertex] (y) at (7 * \s,-4 * \s) {} ;
    \foreach \p in {1,...,\l}{
        \path (z7) to node[vertex, pos=\p * \h] (b\p) {} (y) ;
    }

    \draw (z6) -- (a1) ;
    \draw (z7) -- (b1) ;
    \foreach \p [count = \pp from 2] in {1,...,\lm}{
      \draw (a\p) -- (a\pp) ;
      \draw (b\p) -- (b\pp) ;
    }
    \draw (a\l) -- (x) ;
    \draw (b\l) -- (y) ;

    \foreach \i in {x,y}{
      \draw (\i) --++ (2 * \t,-1 * \t) --++ (0.5 * \t,-2.5 * \t) --++ (-5 * \t,0) --++ (0.5 * \t, 2.5 * \t) -- (\i) ;
      \node at ([yshift=-0.45cm]\i) {$T_{1,\ell}$};
    }

    \begin{scope}[rotate=-40]
    \foreach \i in {z6}{
      \draw (\i) --++ (2 * \t,-1 * \t) --++ (0.5 * \t,-2.5 * \t) --++ (-5 * \t,0) --++ (0.5 * \t, 2.5 * \t) -- (\i) ;
      \node at ([yshift=-0.45cm]\i) {$T_{1,\ell}$};
    }
    \end{scope}

    \node[vertex] (xp) at (3.8 * \s, -4.5 * \s) {} ;
    \foreach \p in {1,...,\lm}{
        \path (a5) to node[vertex, pos=\p * \hp] (c\p) {} (xp) ;
    }

    \node[vertex] (yp) at (5.8 * \s, -4.5 * \s) {} ;
    \foreach \p in {1,...,\lm}{
        \path (b5) to node[vertex, pos=\p * \hp] (d\p) {} (yp) ;
    }

    \node[vertex] (zp) at (5 * \s, -2 * \s) {} ;
    \foreach \p in {1,...,\lm}{
        \path (z7) to node[vertex, pos=\p * \hp] (e\p) {} (zp) ;
    }

    \draw (a5) -- (c1) ;
    \draw (b5) -- (d1) ;
    \draw (z7) -- (e1) ;
    \foreach \p [count = \pp from 2] in {1,...,\lmm}{
      \draw (c\p) -- (c\pp) ;
      \draw (d\p) -- (d\pp) ;
      \draw (e\p) -- (e\pp) ;
    }
    \draw (c4) -- (xp) ;
    \draw (d4) -- (yp) ;
    \draw (e4) -- (zp) ;
  \end{tikzpicture}
  \caption{The tree $T_{3,\ell}$ for $\ell=5$ (and $T'_{3,\ell}$ by keeping only the root from each copy of $T_{1,\ell}$).}
  \label{fig:T3l}
\end{figure}

When $T$ is a~tree and $u, v \in V(T)$, we denote by $T[u,v]$ the unique $u$--$v$ path in~$T$.
When $T$ is a~rooted tree, we denote by $\lca(u,v)$ the least common ancestor of~$u$ and~$v$.
We also write $u \preceq_T v$ if $u$ is a~(not necessarily strict) ancestor of~$v$, and $u \prec_T v$ if $u \preceq_T v$ and $u \neq v$.

\begin{lemma}\label{lem:not-T2l}
  Every tree that does not satisfy the preconditions of~\cref{lem:npd} or of \cref{lem:nds} is an induced minor of~$T_{2,\ell}$ or of~$T_{3,\ell}$ for some integer $\ell$.
  Hence, every tree that is not an induced minor of $T_{2,\ell}$ or of $T_{3,\ell}$, for any $\ell$, is \nega.
\end{lemma}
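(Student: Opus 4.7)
The plan is to prove the contrapositive: every tree $T$ that is not an induced minor of $T_{2,\ell}$ or $T_{3,\ell}$ for any integer $\ell$ satisfies the preconditions of~\cref{lem:npd} or of~\cref{lem:nds}. The ``hence'' statement is then immediate from~\cref{lem:nega}.

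First I would dispatch the trivial case $T \in \mathcal T_1$: such a~$T$ is a~finite induced subgraph of~$T_1$, and one directly verifies that every such finite induced subgraph embeds as an induced subgraph in $T_{1,\ell}$ for $\ell$ large enough, which in turn sits inside $T_{2,\ell}$ as a~$T_{1,\ell}$-blob; so henceforth I may assume $T \notin \mathcal T_1$, and by~\cref{lem:2-vd-claws}, $T$ contains two vertex-disjoint claws linked by a~path. I would then examine the topological skeleton of~$T$ (obtained by suppressing its degree-2 vertices) and the positions of its branching vertices, using the spines $T'_{2,\ell}$ and $T'_{3,\ell}$ (each with exactly four branching vertices: $r$, $c_x$ (resp.~$y$), $p_1^{\text{par}}$, $p_2^{\text{par}}$ distributed along two arms from~$r$, with $\mathcal T_1$-``blobs'' attached at designated points) as the template.

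The core case analysis hunts for two obstructions. If $T$ contains three vertex-disjoint claws with centers $a,b,c$ whose Steiner point $m$ in~$T$ lies at distance at~least~2 from each of $a,b,c$, the three claws directly satisfy the hypothesis of~\cref{lem:npd}. If $T$ contains four vertex-disjoint claws $A, B, C, D$ together with a~separating subpath of length at~least~2 that splits~$T$ into two sides carrying $A \cup B$ and $C \cup D$ respectively, with at~most one cross-edge (moved, by local rerouting, to be incident only to claw leaves), then they satisfy~\cref{lem:nds}. In the remaining case I would argue that the branching vertices of~$T$ cluster along a~short ``spine'' of at~most four degree-$\geq 3$ vertices distributed on two arms from a~central hub, and that every maximal subtree hanging off the spine lies in~$\mathcal T_1$ (otherwise \cref{lem:2-vd-claws} would produce two disjoint claws in such a~subtree which, combined with a~far claw on the spine, would retrigger one of the two preceding obstructions). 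I would then build the induced minor model explicitly: map the spine of~$T$ into the spine of~$T_{2,\ell}$ or~$T_{3,\ell}$, embed each hanging-off subtree into a~$T_{1,\ell}$-blob of the target, and route the pendant paths of~$T$ into the corresponding pendant paths of the target; the choice between~$T_{2,\ell}$ and~$T_{3,\ell}$ is dictated by whether a~pendant path of~$T$'s spine lies adjacent to its $T_{1,\ell}$-attachment (favoring~$T_{2,\ell}$) or on the opposite long arm (favoring~$T_{3,\ell}$).

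The main obstacle is this last step. The branching vertices of~$T$ may distribute along the spine in many ways, and I expect to need a~detailed enumeration to ensure that every such layout either triggers one of the two obstructions or admits a~concrete embedding into $T'_{2,\ell}$ or~$T'_{3,\ell}$. Particular care will be needed in degenerate configurations where two branching vertices of~$T$ coincide or lie adjacent, to maintain the ``pairwise disjoint closed neighborhoods'' condition of~\cref{lem:npd,lem:nds}, and to correctly implement the ``at~most one edge, incident to claw leaves'' refinement of~\cref{lem:nds}.
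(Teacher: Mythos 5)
Your overall strategy is the same as the paper's: either exhibit one of the two obstructions (the triple of claws for \cref{lem:npd}, the quadruple for \cref{lem:nds}) or explicitly embed $T$ into $T_{2,\ell}$ or $T_{3,\ell}$, with the easy cases ($T\in\mathcal T_1$, the ``hence'' via \cref{lem:nega}) handled identically. However, what you have written is a plan rather than a proof: the entire content of the lemma lies in the ``detailed enumeration'' that you explicitly defer, namely showing that a tree avoiding both obstructions really does have a spine with at most a constant number of branching configurations, each of which embeds into $T'_{2,\ell}$ or $T'_{3,\ell}$ with $\mathcal T_1$-pieces hanging off. You give no device that makes this enumeration finite and tractable. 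The paper's proof supplies exactly such a device: root $T$, let $X$ be the set of vertices $x$ for which $T_x$ contains a claw but no proper rooted subtree of $T_x$ does, and let $Z$ be the set of least common ancestors of pairs from $X$; the key structural step (\cref{clm:no-incomparable-lca}) shows $|Z|\leqslant 2$ with the two elements comparable, using \cref{lem:nds} twice, and the remaining case analysis is organized around $|X|$, $|Z|$, and the set $W$ of elements of $X$ far from the unique LCA (bounded via \cref{lem:npd}). Without an analogue of this claim, your ``the branching vertices cluster along a short spine'' assertion is unsupported, and your justification (two disjoint claws in a hanging subtree plus ``a far claw on the spine'') only produces three claws, which does not by itself trigger either obstruction.

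Two further imprecisions would need repair even within your framework. First, your trigger for \cref{lem:npd} --- three disjoint claws whose \emph{centers} have a Steiner point at distance at least $2$ from each center --- is not sufficient: the hypothesis of \cref{lem:npd} concerns the closed neighborhoods of the full $4$-vertex claw sets, and a claw leaf pointing toward the Steiner point can place that point inside $N[X]$, disconnecting the other two claws. Second, your description of the \cref{lem:nds} trigger conflates a separating subpath with the ``at most one edge between $A\cup B$ and $C\cup D$, incident only to claw leaves'' condition, which concerns direct adjacency between the claw sets rather than a separator; and you must additionally produce the paths $P$ and $Q$ avoiding the closed neighborhoods of the opposite pair. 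These are fixable, but as it stands the proposal does not constitute a proof of the lemma.
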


\begin{proof}
  Let $T$ be any tree failing to meet the hypotheses of both \cref{lem:npd} and \cref{lem:nds}. Fix $\ell:= |V(T)|$.
  Let us root $T$ at some arbitrary vertex $u \in V(T)$. For each $x\in V(T)$, we denote by $T_x$ the subtree of~$T$ rooted at~$x$.
  Let $X \subseteq V(T)$ be the set of vertices $x$ such that $T_x$ contains a~claw but no proper rooted subtree of~$T_x$ contains a~claw.
  If $X$ is empty, then $T$ is a~path (hence an induced minor of~$T_{2,\ell}$).

  Let us first assume that $X=\{x\}$.
  Then it holds that every claw of~$T$ intersects $V(T[u,x])$ (at a~subpath).
  Let $A$ be any subset of $V(T_x)$ inducing a~claw.
  Let $D$ be a~subset of $V(T)$ inducing a~claw that is closest to the root in $T$, i.e., there is no $D' \subseteq V(T)$ inducing a~claw such that $D \cap V(T[u,x])$ contains a~descendant of a~vertex $D' \cap V(T[u,x])$ that is not itself in~$D' \cap V(T[u,x])$.
  We assume that $A$ and $D$ are disjoint, as otherwise $T$ is easily seen to be an induced minor of~$T_{2,\ell}$.

  Let $C$ be the vertex set of a~claw in~$T$ that is closest to the root among those disjoint from $D$.
  Again if $A \cap C \neq \emptyset$, $T$ is readily an induced minor of~$T_{2,\ell}$.
  Finally, let $B$ be the vertex set of a~claw in~$T$ that is furthest from the root among those disjoint from $A$.
  If $B$ and $C$ are not disjoint, then $T$ is an induced minor of the graph obtained from a~$(2\ell+1)$-vertex path with a~copy of $T_{1,\ell}$ rooted at the first, middle, and last vertex, and an $\ell$-vertex path attached to the second, penultimate, $\ell$-th and $(\ell+2)$-nd vertex.
  (Observe that the latter two vertices are the neighbors of the middle vertex.)
  Such a~tree is an induced minor of~$T_{2,\ell}$: solely keep an $\ell$-vertex path from the copies of $T_{1,\ell}$ adjacent to the root.
  As the preconditions of~\cref{lem:nds} are not met, $C$ and $D$ are adjacent, and if the centers of the claws $C,D$ are both in $V(T[u,x])$, they are not at distance~3 from each other.
  Therefore, $T$ is an induced minor of the graph $S_\ell$ obtained from a~$(2\ell+1)$-vertex path with a~copy of $T_{1,\ell}$ rooted at the first, $(\ell+1)$-st, $(\ell+2)$-nd, last vertex, and an $\ell$-vertex path attached to the second, penultimate, $\ell$-th vertex.
  This is itself an induced minor of $T_{2,\ell}$: contract the edge between the root and the first copy of $T_{1,\ell}$ and discard the $\ell-1$ other copies adjacent to the root.
  
  Henceforth, we assume that $|X| \geqslant 2$.
  Let $Z := \{\lca(x,y)~:~x \neq y \in X\}$.
  Observe that $|Z| \geqslant 1$ (since $|X| \geqslant 2$).
  The next claim establishes that $|Z| \leqslant 2$.

  \begin{claim}\label{clm:no-incomparable-lca}
    There are at most two distinct least common ancestors of distinct pairs of~$X$, and they are ancestor--descendants. 
  \end{claim}

  \begin{proof}
    First assume that $z = \lca(x,x')$ with $x \neq x' \in X$, $z'=\lca(y,y')$ with $y \neq y' \in X$, and $z$ and $z'$ are not ancestor--descendants.
    Let $A$ (resp.~$B$, $C$, $D$) be a~subset of $V(T_x)$ (resp.~$V(T_{x'})$, $V(T_y)$, $V(T_{y'})$) containing $x$ (resp.~$x'$, $y$, $y'$) and inducing a~claw.
    Then, $A,B,C,D$ and the paths $T[x,x'], T[y,y']$ satisfy~\cref{lem:nds}, contradicting our assumption for~$T$.

    Now assume that each of $z_1 \prec_T z_2 \prec_T z_3$ is a~least common ancestor of a~distinct pair of~$X$.
    Let $x \neq x' \in X$ be such that $z_3 = \lca(x,x')$.
    Then, let $y \in X$ be such that $z_2 = \lca(x,y)$, and $y' \in X$ be such that $z_1 = \lca(x,y')$.
    We then conclude as in the previous paragraph since $V(T[x,x'])$ is non-adjacent to $V(T_y) \cup V(T_{y'})$, and $V(T[y,y'])$ is non-adjacent to $V(T_x) \cup V(T_{x'})$.
  \end{proof} 

  \textbf{Case 1.} We now deal with the case when $|Z|=1$, say, $Z=\{z\}$.
  Let $W$ be the vertices of $X$ non-adjacent to $z$.
  As $T$ does not meet the conditions of~\cref{lem:npd}, $|W| \leqslant 2$.

  \textbf{Case 1.1.} We examine the subcase when $|W|=2$, say, $W=\{w_1,w_2\}$.
  Then, the component of $T-z$ containing the parent $p_z$ of~$z$ (if it exists) is a~connected induced subgraph of~$T_{1,\ell}$ rooted at~$p_z$, as otherwise the conditions of \cref{lem:nds} are satisfied.
  Let $z_1$ (resp.~$z_2$) be the neighbor of $z$ on~$T[z,w_1]$ (resp.~$T[z,w_2]$).
  Re-rooting $T$ at $z$, we have that, outside of $z_1, z_2$, the children of $z$ are roots of connected induced subgraphs of~$T_{1,\ell}$.
  Let $w'_1$ (resp.~$w'_2$) be the parent of~$w_1$ (resp.~$w_2$).
  Let $C_1$ (resp.~$C_2$) be the connected component of $T-\{z,w_1\}$ (resp.~$T-\{z,w_2\}$) containing $z_1$ (resp.~$z_2$).
  At~most one of $C_1,C_2$ contains a~claw, otherwise $T$ satisfies~\cref{lem:nds}.
  Say, without loss of generality that $C_2$ has no claw (hence is a~path).

  If $z_2$ has two children, then every internal vertex of $T[z_1,w'_1], T[z_2,w'_2]$ has degree~2, otherwise $T$ satisfies~\cref{lem:nds}.
  And it can then be observed that $T$ is an induced minor of some~$T_{3,\ell}$.
  If $z_2$ has only one child and $z$ has degree at~least~3, then $T$ is an induced minor of some~$T_{2,\ell}$.
  If finally $z_2$ has a~single child and $z$ has degree 2, then $T$ is an induced minor of~$S_\ell$ (as defined above), hence of~$T_{2,\ell}$.

  \textbf{Case 1.2.}
  We now assume that $|W|=1$, say $W=\{w\}$.
  Let $z'$ be the neighbor of $z$ on $T[z,w]$, and $w'$ the parent of~$w$.
  We re-root $T$ at~$z$.
  If among the new descendants of~$z$, there is a~claw that is non-adjacent to $z$, then we are back to Case 1.1.
  Indeed there can only be one new element of~$X$, as otherwise \cref{lem:nds} is satisfied.
  We can thus assume that every child of~$z$ but $z'$ is the root of a~connected induced subgraphs of~$T_{1,\ell}$.
  If $z'=w'$, the subtree of $T-\{w\}$ rooted at~$z'$ is an induced subgraph of $T_{1,\ell}$ rooted at~$z'$, thus $T$ is an induced minor of $T_{2,\ell}$ (and $T_{3,\ell}$).
  If $z'$ is the parent of~$w'$, in addition to the branch leading to~$w$, attached to $T[z',w']$ can only be: an induced subgraph of~$T_{1,\ell}$ rooted at~$z'$, and two paths of descendants for~$w'$; hence $T$ is an induced minor of $T_{2,\ell}$.
  If finally $z'$ and $w'$ are non-adjacent, in addition to the branch leading to~$w$, attached to $T[z',w']$ can only be: one path of descendants for~$w'$, one path of descendants for the neighbor of $z'$ on $T[z',w']$, and an induced subgraph of $T_{1,\ell}$ rooted at~$z'$; hence $T$ is an induced minor of $T_{2,\ell}$.
  It is indeed easy to check that anything else contradicts that there is a~single element of~$X$ in the subtree rooted at~$z'$, or the fact that $T$ does not satisfy the preconditions of~\cref{lem:nds}.

  \textbf{Case 1.3.}
  We finally assume that $W$ is empty.
  Then there are $x_1 \neq x_2 \in X$ both adjacent to~$z$.
  We re-root $T$ at~$z$, and recompute the new set~$X$.
  It holds that the new $X$ is equal to the old $X$ plus at most one new descendant of~$z$, as otherwise $T$ satisfies~\cref{lem:nds}.
  And this case can now be handled as Case 1.2.

  \medskip

  \textbf{Case 2.}
  We finally move to the case when $|Z|=2$, say, $Z=\{z_1,z_2\}$ and $z_1 \prec_T z_2$.
  Let $x, x' \in X$ be such that $z_2 = \lca(x,x')$ and $x'' \in X$ be such that $z_1=\lca(x,x'')$ (and equivalently $z_1=\lca(x',x'')$).
  One of $x, x'$ is adjacent to $z_2$, otherwise $T$ satisfies~\cref{lem:npd}.
  Say without loss of generality that $x$ and $z_2$ are adjacent.
  We re-root $T$ at~$z_2$.
  Among the new descendants of $z_2$, there is exactly one vertex in the new set~$X$, otherwise $T$ satisfies~\cref{lem:nds}.
  Hence we are back to Case 1.1.
\end{proof}

A~forest is not an induced minor of any $T_{2,\ell} \cup \ell T_{1,\ell}$ or $T_{3,\ell} \cup \ell T_{1,\ell}$ if and only if it has two connected components not in~$\mathcal T_1$ or one connected component that is not an induced minor of any $T_{2,\ell}$ or $T_{3,\ell}$.
Thus \cref{lem:2-not-in-T1,lem:not-T2l} imply the following.

\begin{lemma}\label{lem:nega2}
  Every forest that is not an induced minor of any $T_{2,\ell} \cup \ell T_{1,\ell}$ or $T_{3,\ell} \cup \ell T_{1,\ell}$ with $\ell \geqslant 2$ is \nega.
\end{lemma}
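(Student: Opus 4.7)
The lemma follows by composing \cref{lem:2-not-in-T1} and \cref{lem:not-T2l} once the following dichotomy (stated in the paragraph immediately before the lemma) is established: if a forest $F$ is not an induced minor of any $T_{2,\ell} \cup \ell T_{1,\ell}$ or $T_{3,\ell} \cup \ell T_{1,\ell}$, then either (a) at least two connected components of $F$ lie outside $\mathcal{T}_1$, or (b) some connected component of $F$ is not an induced minor of any $T_{2,\ell}$ or $T_{3,\ell}$.

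My plan is to prove this dichotomy by contraposition: assuming that at most one component of $F$ lies outside $\mathcal{T}_1$ and that every component of $F$ is an induced minor of some $T_{2,\ell}$ or $T_{3,\ell}$, I would construct an induced-minor model of $F$ inside $T_{2,\ell} \cup \ell T_{1,\ell}$ (or its $T_3$ counterpart) for a sufficiently large $\ell$. Write $F_1, \dots, F_c$ for the components of $F$, with $F_1$ being the possibly exceptional one, so $F_2, \dots, F_c \in \mathcal{T}_1$. Each $F_i$ with $i \geqslant 2$ is a finite induced subgraph of $T_1$, and bounding its branching and depth shows it is an induced subgraph of some $T_{1,\ell_i}$; by hypothesis, $F_1$ is an induced minor of some $T_{2,\ell_1}$ or $T_{3,\ell_1}$. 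Setting $\ell := \max(\ell_1, \dots, \ell_c, c-1, 2)$ and using the monotonicity $T_{j,\ell'} \preceq T_{j,\ell}$ (as induced minors) for $\ell \geqslant \ell'$ and $j \in \{1,2,3\}$, I transport all embeddings to the common parameter $\ell$. Then $F_1$ embeds into the $T_{2,\ell}$ (or $T_{3,\ell}$) component and each remaining $F_i$ embeds into its own copy of $T_{1,\ell}$, of which $\ell \geqslant c-1$ are available, yielding the desired induced-minor model and contradicting the hypothesis.

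With this dichotomy, the lemma falls out: if (a) holds, \cref{lem:2-not-in-T1} directly gives that $F$ is \nega; if (b) holds, the offending component $T$ is \nega by \cref{lem:not-T2l}, and since $T$ is an induced subgraph (hence an induced minor) of $F$, transitivity of induced minors ensures that any weakly sparse class of unbounded pathwidth excluding $T$ as an induced minor also excludes $F$, so $F$ is \nega as well.

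The hard part will be the monotonicity claim $T_{j,\ell'} \preceq T_{j,\ell}$ (as induced minors) for $\ell \geqslant \ell'$, which requires revisiting the explicit constructions of $T'_{2,\ell}$ and $T'_{3,\ell}$ --- the root has $\ell+2$ children, two of its edges are subdivided $\ell+1$ times, three $\ell$-edge paths are grafted at specified nodes, and a copy of $T_{1,\ell}$ is attached at $x$ and at each original leaf --- and exhibiting explicit branch sets. The point is that passing from $\ell'$ to $\ell$ only lengthens subdivisions and paths and increases the number of parallel branches at the root and inside each grafted $T_{1,\ell}$, and all such surplus can be reabsorbed by edge-contractions and leaf-deletions; nothing deeper is at play.
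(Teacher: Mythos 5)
Your proposal is correct and follows essentially the same route as the paper: the paper's proof consists of asserting exactly the dichotomy you state (a forest avoids all $T_{2,\ell}\cup\ell T_{1,\ell}$ and $T_{3,\ell}\cup\ell T_{1,\ell}$ iff it has two components outside $\mathcal T_1$ or a component that is an induced minor of no $T_{2,\ell}$ or $T_{3,\ell}$) and then invoking \cref{lem:2-not-in-T1} and \cref{lem:not-T2l}. The only difference is that you spell out the proof of the dichotomy (via the monotonicity of $T_{j,\ell}$ in $\ell$ under induced minors and the placement of the $\mathcal T_1$-components into the $\ell$ copies of $T_{1,\ell}$), which the paper leaves as an unstated routine verification.
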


\section{Positive Forests}\label{sec:positive-forests}

In this section, we show that every forest that was not classified as \nega in~\cref{sec:negative-forests} is \posi.
We start with a~lemma that helps growing copies of $T_{1,\ell}$ in an induced minor model within an ample constellation.

\begin{lemma}\label{lem:induced-minor-extension}
  Let $\cro= (S_\cro, \mathcal L_\cro)$ be an $(\ell+1)$-ample constellation, and let $\mathcal M$ be an induced minor model of some graph $H$ in~$\cro$ such that there is a~path $P \in \mathcal L_\cro$ not intersected by~$\mathcal M$.
  Let $z \in S_\cro$ belong to the branch set of~$\mathcal M$ representing some~$u \in V(H)$.
  
  Then, there is an induced minor model $\mathcal M' \supset \mathcal M$ of $H'$ in $\cro$ with $\bigcup \mathcal M' \subseteq \bigcup \mathcal M \cup V(P)$, where $H'$ is the graph obtained from $H$ by adding a~disjoint $(2\ell+1)$-vertex path and making $u$ adjacent to the center of this path.
\end{lemma}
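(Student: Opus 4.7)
The plan is to realize all $2\ell+1$ new branch sets inside $V(P)$ (which is forced by $\bigcup \mathcal M' \subseteq \bigcup \mathcal M \cup V(P)$), exploiting $(\ell+1)$-ampleness to ensure that only $z$, among $S_\cro$-vertices in $\bigcup \mathcal M$, has any neighbor in the chosen portion of $P$. Write $P = v_1 v_2 \cdots v_n$ and enumerate the neighbors of $z$ on $P$ as $v_{k_1}, \ldots, v_{k_m}$ with $k_1 < \cdots < k_m$; they exist by the definition of a constellation. The strengthened $(\ell+1)$-ampleness gives $k_1 \geq \ell+2$ and $k_m \leq n - \ell - 1$.

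Let $v_{k_1}, \ldots, v_{k_b}$ be the \emph{first cluster} of $z$-neighbors, that is, the longest prefix with $k_{i+1} - k_i \leq 2\ell$ for every $1 \leq i < b$ (with $b = 1$ if $m = 1$ or $k_2 - k_1 > 2\ell$). Letting $X_v$ denote the branch set of $\mathcal M$ representing $v \in V(H)$ (so $z \in X_u$), I would define
\[ X_{q_0} := \{v_{k_1}, v_{k_1+1}, \ldots, v_{k_b}\}, \quad X_{q_{-i}} := \{v_{k_1 - i}\}, \quad X_{q_i} := \{v_{k_b + i}\} \text{ for } 1 \leq i \leq \ell, \]
and set $\mathcal M' := \mathcal M \cup \{X_{q_i} : -\ell \leq i \leq \ell\}$. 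The indices are in $[1,n]$ because $k_1 - \ell \geq 2$ and $k_b + \ell \leq n - 1$. Since $V(P) \cap \bigcup \mathcal M = \emptyset$, the new branch sets are disjoint from $\mathcal M$ and pairwise disjoint, and $X_{q_0}$ is a subpath of $P$ (hence connected).

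The core claim to verify is that every $v_j$ with $k_1 - \ell \leq j \leq k_b + \ell$ has $z$ as its only possible $S_\cro$-neighbor: each such $v_j$ lies within $P$-distance $\ell$ of some $z$-neighbor $v_{k_i}$ (trivially on the two flanks, and in the interior because intra-cluster gaps are at most $2\ell$, placing $v_j$ within $\ell$ of the closer of $v_{k_i}, v_{k_{i+1}}$), and $(\ell+1)$-ampleness then forbids any other $S_\cro$-vertex from having a neighbor this close to a $z$-neighbor. Furthermore, by minimality of $k_1$ and maximality of $b$ (which forces $k_{b+1} - k_b > 2\ell$ when $b<m$), none of the flank vertices $v_{k_1 - \ell}, \ldots, v_{k_1 - 1}, v_{k_b + 1}, \ldots, v_{k_b + \ell}$ is itself a $z$-neighbor. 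Consequently $X_{q_0}$ is adjacent to $X_u$ via $v_{k_1} \sim z$ and non-adjacent to every other old branch set, whereas each $X_{q_i}$ with $i \neq 0$ has no $S_\cro$-neighbor at all, hence is non-adjacent to every $X_v$ (including $X_u$). Since the new branch sets are consecutive segments of $P$, they are pairwise adjacent exactly when $|i - j| = 1$, yielding the required path $X_{q_{-\ell}}, \ldots, X_{q_\ell}$ together with its attachment to $X_u$.

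The main obstacle the cluster construction sidesteps is that $z$ may have several neighbors on $P$ packed within a window shorter than $2\ell + 1$: a naive ``centered singleton window'' around a single $z$-neighbor would then still contain further $z$-neighbors, forcing unwanted $X_{q_i}$--$X_u$ adjacencies for $i \neq 0$. Absorbing the whole first cluster into $X_{q_0}$ neutralizes this, and the combination of the ampleness parameter $\ell+1$ with the cutoff $2\ell$ on intra-cluster gaps is precisely what guarantees that no $S_\cro$-vertex other than $z$ can interfere, either on the flanks or in the interior.
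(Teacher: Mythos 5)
Your proposal is correct and follows essentially the same strategy as the paper's proof: both take a subpath of $P$ spanning a batch of $z$-neighbors as the central branch set (the paper uses a maximal subpath whose endpoints are $z$-neighbors and which avoids $N(S_\cro\setminus\{z\})$, you use the first cluster of $z$-neighbors with gaps at most $2\ell$), flank it with $\ell$ singleton branch sets on each side, and invoke the strengthened $(\ell+1)$-ampleness to rule out adjacencies to $S_\cro\setminus\{z\}$ and, via maximality of the chosen subpath, adjacencies of the flanks to $z$. The only difference is the (equally valid) bookkeeping used to delimit the central subpath.
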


\begin{proof}
  Fix a~(non-empty) maximal subpath $P'$ of $P$ such that the first and last vertices of~$P'$ are adjacent to~$z$, and no vertex of $P'$ is adjacent to a~vertex in~$S_\cro \setminus \{z\}$.
  Note that $P'$ may be a~1-vertex path.
  Let $a_1, \ldots, a_\ell \in V(P)$ be the $\ell$ (distinct) vertices of~$P$ just before $P'$, and $b_1, \ldots, b_\ell \in V(P)$ be the $\ell$ (distinct) vertices of~$P$ just after $P'$.
  By~$(\ell+1)$-ampleness (in the strengthened sense) these vertices exist.
  We simply set $\mathcal M' := \mathcal M \cup \{\{a_1\}, \ldots, \{a_\ell\}, V(P'), \{b_1\}, \ldots, \{b_\ell\}\}$.

  By definition of~$P'$, there is no edge between $V(P')$ and $S_\cro \setminus \{z\}$.
  Since $\cro$ is $(\ell+1)$-ample, there is no edge between $\{a_1, \ldots, a_\ell, b_1, \ldots, b_\ell\}$ and $S_\cro \setminus \{z\}$.
  By maximality of~$P'$, there is no edge between $\{a_1, \ldots, a_\ell, b_1, \ldots, b_\ell\}$ and $\{z\}$.
  This guarantees that $\mathcal M'$ is an induced minor model of~$H'$. 
\end{proof}

We will need the following notion.
Given a~1-ample constellation $\cro = (S_\cro, \mathcal L_\cro)$, and two adjacent vertices $u \in S_\cro$ and $v$ on path $P \in \mathcal L_\cro$, the \emph{interval of $v$} is the vertex set of the maximal subpath $P'$ of $P$ subject to (1) $v \in V(P')$, (2) the endpoints of $P'$ are adjacent to $u$, and (3) no vertex of $S_\cro \setminus \{u\}$ is adjacent to~$V(P')$.

\begin{lemma}\label{lem:t2l-in-zigzagged}
  For any integers $\ell > q \geqslant 1$, any $q$-zigzagged $(\ell+1)$-ample $(2(q+2)(q+\ell+2),$ $(2\ell+3)\ell+2)$-constellation contains $T_{2,\ell} \cup \ell T_{1,\ell}$ and $T_{3,\ell} \cup \ell T_{1,\ell}$ as induced minors. 
\end{lemma}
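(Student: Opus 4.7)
The plan is to construct an induced minor model of $T_{2,\ell}\cup\ell T_{1,\ell}$ (and, by the same template, of $T_{3,\ell}\cup\ell T_{1,\ell}$) in two stages: first embed a skeleton isomorphic to $T'_{2,\ell}$ together with $\ell$ isolated vertices (the future roots of the disjoint copies of $T_{1,\ell}$) inside $\cro$, and then invoke \cref{lem:induced-minor-extension} exactly $\ell$ times at each of the $2\ell+3$ attachment points to grow the $T_{1,\ell}$'s. The budget $|\mathcal L_\cro|=(2\ell+3)\ell+2$ is tight by design: the $(2\ell+3)\ell$ arms will consume $(2\ell+3)\ell$ fresh paths of $\mathcal L_\cro$, and the two remaining paths $P_\star,P_{\star\star}\in\mathcal L_\cro$ will host the entire skeleton.

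To set up the skeleton, I would select, in the $\prec$-order of $S_\cro$, two clusters of star vertices separated by a wide buffer: a cluster $R$ of $\ell$ consecutive stars to serve as the roots of the $\ell$ disjoint $T_{1,\ell}$'s, and a second cluster $C$ of $\ell+4$ consecutive stars to serve as the vertices $x,y$ and the $\ell+2$ original leaves of $T'_{2,\ell}$ (the root $r$ will instead be represented by a subpath of $P_\star$, not by a star). The size bound $|S_\cro|=2(q+2)(q+\ell+2)$ is calibrated so that one can afford, around each of $R$ and $C$, a buffer of $\Omega(q+\ell)$ unused stars on each side. These buffers are essential: the $q$-zigzag hypothesis forces any $\cro$-route between distant stars to cross most intermediate star pockets, and only a sufficiently large buffer prevents those routes from contaminating the branch sets of the skeleton.

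Next, I would realize the skeleton on the two dedicated paths $P_\star,P_{\star\star}$. The branch set of $r$ is a single subpath $I\subset P_\star$ chosen to contain one interval of each of the $\ell+2$ children of $r$ in $C$ and \emph{no} interval of any star outside $C$; the direct-child attachments then become singleton star branch sets that are automatically adjacent to $I$. The two subdivision spines (from $x$ and $y$ to their respective far leaves $\ell_{\ell+1},\ell_{\ell+2}$), together with the three $\ell$-edge descendant paths of $T'_{2,\ell}$ (resp.\ $T'_{3,\ell}$), are realized on $P_{\star\star}$: between consecutive cluster stars of $C$ one takes an appropriate $\cro$-route whose internal vertices are, by $(\ell+1)$-ampleness, numerous enough to host every required subdivision vertex. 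Once this skeleton is in place, \cref{lem:induced-minor-extension} is applied $\ell$ times at each of the $2\ell+3$ attachment stars, each application consuming a fresh path of $\mathcal L_\cro$, which grows the $2\ell+3$ copies of $T_{1,\ell}$ and completes the model.

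The main obstacle, as I foresee it, is controlling the zigzag-induced adjacencies on $P_\star$ and $P_{\star\star}$: any subpath included in a branch set is automatically adjacent to every star whose interval lies inside it, and the $q$-zigzag hypothesis only guarantees that intervals agree with the $\prec$-order up to $O(q)$ swaps. Keeping the skeleton clean therefore requires trimming $I$ and the $P_{\star\star}$-segments strictly inside $C$ with a $q$-sized safety margin on each end; the fact that we can always afford such trimming is exactly what the quadratic factor $(q+2)(q+\ell+2)$ in $|S_\cro|$ and the strengthened $(\ell+1)$-ampleness provided by \cref{lem:technical} are there to guarantee. The distinction between $T_{2,\ell}$ and $T_{3,\ell}$ affects only where the third $\ell$-edge descendant path is hung along $P_{\star\star}$, and is handled by the same template with a trivial re-routing.
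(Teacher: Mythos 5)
Your two-stage architecture --- build a skeleton model of $T'_{2,\ell}\cup\ell K_1$ on two dedicated paths of $\mathcal L_\cro$, then apply \cref{lem:induced-minor-extension} $\ell$ times at each of the $2\ell+3$ attachment vertices, consuming the remaining $(2\ell+3)\ell$ paths --- matches the paper exactly, and you correctly identify adjacency control on the host paths as the crux. The gap is in the skeleton design itself, and it is not repairable by ``trimming with a $q$-sized safety margin.'' Representing the $\ell+2$ children of the root by \emph{singleton stars} of a consecutive cluster $C$ clashes with the $q$-zigzag hypothesis in two ways. First, the hypothesis only guarantees that a route is adjacent to all but \emph{fewer than} $q$ of the stars $\prec$-between its endpoints; so a subpath $I\subset P_\star$ reaching from a neighbor of the first required child to a neighbor of the last may simply miss up to $q-1$ of the intermediate children, destroying root--child edges. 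This forces an over-provision-and-select step (the paper starts from $q+\ell+1$ candidates and keeps $\ell+2$ good ones), which your write-up omits. Second, and fatally, the hypothesis \emph{forces} adjacencies you cannot tolerate: any spine realized as a $\cro$-route on $P_{\star\star}$ between two stars of $C$ is adjacent to all but fewer than $q$ of the stars $\prec$-sandwiched by its endpoints, and since $\ell>q$ this yields several edges from a spine interior to singleton-star children, to $y$, or to the other far leaf --- edges absent from $T'_{2,\ell}$. No ordering of $C$ escapes both constraints simultaneously: the root must $\prec$-sandwich all its children to see them, while the spines must sandwich none of them; and even with the far leaves pushed to the extremes of $C$, nothing in the zigzag condition (nor in ampleness) prevents a child's neighborhood on $P_{\star\star}$ from hitting a spine, or a far leaf's neighborhood on $P_\star$ from hitting $I$.

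The paper's construction is built precisely to dissolve this tension, and the difference from yours is structural rather than cosmetic. The $\ell+2$ children of the root are not singleton stars but \emph{direct $\cro$-routes} $P_i$ lying on one path $Q$, each anchored at a pair of stars spaced $2(q+1)$ apart in $\prec$; the root together with \emph{both} spines is a single \emph{shortest} $\cro$-route $P$ on a second path $Q'$, running from $x=s_1$ to a star beyond all anchors. The adjacencies that $q$-zigzaggedness forces between $V(P)$ and the anchor stars are then exactly the desired root--child edges, absorbed by a minimal middle segment $R$ of $P$; the spine segments are the portions of $P$ before the first and after the last such forced adjacency (so they are non-adjacent to every child by definition of $x'$ and $y'$); and shortestness of $P$ is what keeps the $\ell$ reserved stars realizing $\ell K_1$ off $N(V(P))$. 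Your proposal would need to be rebuilt along these lines before the adjacency claims in your third paragraph can be verified.
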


\begin{proof}
  Let $\cro = (S_\cro, \mathcal L_\cro)$ be a~$q$-zigzagged $(\ell+1)$-ample $(2(q+2)(q+\ell+2),$ \mbox{$(2\ell+3)\ell+2)$}-constellation with total order $\prec$ on $S_\cro$.
  We will prove both claims of the lemma by constructing in $\cro$ an induced minor model of $T_{4,\ell} \cup \ell T_{1,\ell}$, where $T_{4,\ell}$ is the tree obtained by adding to $T_{2,\ell}$ the $\ell$-edge path that $T_{3,\ell}$ has but $T_{2,\ell}$ has not.
  This is sufficient since $T_{2,\ell}$ and $T_{3,\ell}$ are induced minors of~$T_{4,\ell}$.
  We denote by $T'_{4,\ell}$ the tree obtained by only keeping the root from each copy of $T_{1,\ell}$ in $T_{4,\ell}$.
  We will describe an induced minor model (of an induced subdivision) of $T'_{4,\ell} \cup \ell K_1$, and conclude by applying~\cref{lem:induced-minor-extension}.
  
  We call a~$\cro$-route that is not adjacent to any other vertices of $S_\cro$ than its endpoints, and inclusionwise minimal (i.e., the path is induced) a~\emph{direct} $\cro$-route.
  We first find $q+\ell+1$ direct $\cro$-routes with pairwise distinct endpoints.
  Let $s_1 \prec s_2 \prec \ldots$ enumerate $S_\cro$.
  For every $i \in [q+\ell+1]$, let $P_i$ be an arbitrarily chosen direct $\cro$-route starting at $s_{2(q+1)i}$ along some fixed path $Q$ of $\mathcal L_\cro$.
  Let $h_i$ be such that $P_i$ is an $s_{2(q+1)i}$--$s_{h_i}$ path.
  As $\cro$ is $q$-zigzagged, $h_i \in [2(q+1)i-q,2(q+1)i+q] \setminus \{2(q+1)i\}$.
  In particular, the paths $P_i$ are pairwise vertex-disjoint and non-adjacent.
   
  Let $P$ be a~shortest $\cro$-route between $x := s_1$ and $\{s_{2(q+1)(q+\ell+2)}, \ldots, s_{2(q+1)(q+\ell+2)+q}\}$ using some $Q' \in \mathcal L_\cro \setminus \{Q\}$.
  We denote by $y$ the other endpoint of $P$ (than~$x$).
  As $\cro$ is \mbox{$q$-zigzagged}, there are $\ell+2$ distinct vertices $z_1 \prec z_2 \prec \ldots \prec z_{\ell+2} \in S_\cro$ with each $z_j$ equal to some $s_{2(q+1)i}$ such that both $s_{2(q+1)i}$ and $s_{h_i}$, which we denote by $z_{j+\ell+1}$, are adjacent to~$V(P)$.
  Another consequence of~$q$-zigzaggedness (as $P$ is shortest) is that no vertex $s_j \in S_\cro$ with $j > 2(q+1)(q+\ell+2)+q$ is adjacent to $V(P)$.
  We set $\eta := 2(q+1)(q+\ell+2)+q$, and will eventually take the $\ell$ vertices $s_{\eta+1}, \ldots, s_{\eta+\ell}$ to realize $\ell K_1$.

  Let $I_x$ (resp.~$I_y$) be the interval of the neighbor of $x$ (resp.~of~$y$) on~$P$.
  Let $u_1, \ldots, u_\ell$ (resp.~$u'_1, \ldots, u'_\ell$) be the vertices of $Q'$ just before (resp.~just after)~$I_x$.
  Let $v_1, \ldots, v_\ell$ (resp.~$v'_1, \ldots, v'_\ell$) be the vertices of $Q'$ just after (resp.~just before)~$I_y$.
  By~$(\ell+1)$-ampleness (in the strengthened sense) $u_1, \ldots, u_\ell$ and $v_1, \ldots, v_\ell$ do exist.
  
  When traversing $P$ from $x$ to $y$, let $x' \in V(P)$ (resp.~$y' \in V(P)$) be the first (resp.~last) vertex with a~neighbor in~$\{z_1, \ldots, z_{2(\ell+1)}\}$.
  Let $P_{i(x')}$ (resp~$P_{i(y')}$) be the unique path in $\{P_1, \ldots, P_{\ell+1}\}$ with an endpoint that is a~neighbor of~$x'$ (resp.~of~$y'$).
  It holds that $i(x') \neq i(y')$ since $\cro$ is $q$-zigzagged and $\ell > q$.
  Indeed, we get $\cro$-routes between $x$ and $z_{i(x')}$ or $z_{i(x')+\ell}$, and between $z_{i(y')}$ or $z_{i(y')+\ell}$ and $y$ that are not adjacent to any vertices of $S_\cro$ other than their endpoints, and there are more than $2q$ vertices of $S_\cro$ between $x$ and $y$ along $\prec$. 
  
  Let $I_{x'}$ (resp.~$I_{y'}$) be the interval of~$x'$ (resp.~of~$y'$).
  Let $w_1, \ldots, w_\ell$ (resp.~$w'_1, \ldots, w'_\ell$) be the vertices of~$P$ just after $I_{x'}$ (resp.~just before~$I_{y'}$).
  Let $J$ be the set of vertices of~$P$ that are strictly between $\{u'_1, \ldots, u'_\ell\}$ and $I_{x'}$.
  Let $J'$ be the set of vertices of~$P$ that are strictly between $I_{y'}$ and $\{v'_1, \ldots, v'_\ell\}$.
  
  As $\cro$ is $(\ell+1)$-ample, the sets $\{u_1, \ldots, u_\ell\}$, $I_x$, $\{u'_1, \ldots, u'_\ell\}$, $I_{x'}$, $\{w_1, \ldots, w_\ell\}$, $\{w'_1, \ldots, w'_\ell\}$, $I_{y'}$, $\{v'_1, \ldots, v'_\ell\}$, $I_{y}$, $\{v_1, \ldots, v_\ell\}$ are pairwise disjoint.
  Let $R$ be the vertex set of a~minimal subpath of $P$ between $\{w_1, \ldots, w_\ell\}$ and $\{w'_1, \ldots, w'_\ell\}$ adjacent to $V(P_i)$ for every $i \in [\ell+2]$.
  There is such a~subpath since $i(x') \neq i(y')$, the subpath of~$P$ up to $\{w_1, \ldots, w_\ell\}$ is only adjacent to exactly one of $z_{i(x')}, z_{i(x')+\ell}$ and the subpath of $P$ after $\{w'_1, \ldots, w'_\ell\}$ is only adjacent to exactly one of $z_{i(y')}, z_{i(y')+\ell}$, while $P$ is adjacent to every vertex in~$\{z_1, \ldots, z_{2(\ell+2)}\}$. 
  By $(\ell+1)$-ampleness, $R$ is non-adjacent to $\{w_1, \ldots, w_\ell, w'_1, \ldots, w'_\ell\}$.
  \Cref{fig:constr-zigzagged} illustrates the vertices and subsets defined.

  \begin{figure}[!ht]
  \centering
  \begin{tikzpicture}[vertex/.style={draw,circle,inner sep=0.1cm}]
    \foreach \i/\j/\l in {1/0/x, 2/2/z_1,3/3.1/z_{\ell+1}, 4/4.2/z_2,5/4.9/z_{\ell+2}, 6/6/{z_{i(x')}},7/7/{z_{i(y')}}, 8/8.1/z_{\ell+2},9/9.3/z_{2\ell+2}, 10/11/y}{
      \node[vertex] (s\i) at (\j,0) {} ;
      \node at (\j,0.35) {$\l$} ;
    }

    \draw (s2) --++ (0.2,-1) --++(0.8,0) node[above,midway] {$P_1$} -- (s3) ;
    \draw (s4) --++ (0.1,-1) --++(0.5,0) node[above,midway] {$P_2$} -- (s5) ;
    \draw (s8) --++ (0.3,-1) --++(0.75,0) node[above,midway] {$P_{\ell+2}$} -- (s9) ;

    \node at (-1.5,-1) {$Q$} ;
    \draw[gray,very thin] (-1,-1) -- (12.5,-1) ;
    
    \node at (-1.5,-2) {$Q'$} ;
    \draw[gray,very thin] (-1,-2) -- (12.5,-2) ;

    \draw (s1) -- (0.6,-2) ;
    \draw (s6) -- (4.1,-2) ;
    \draw (s7) -- (9.1,-2) ;
    \draw (s10) -- (10.85,-2) ;

    \draw[->] (0.6,-2.8) to node[below,midway] {$P$} (10.85,-2.8) ;

    \draw[thick] (0.35,-2) --++(0.25,0) node[midway,below] {$I_x$};
    \draw[thick] (4.1,-2) --++(0.7,0) node[midway,below] {$I_{x'}$};
    \draw[thick] (8.81,-2) --++(0.29,0) node[midway,below] {$I_{y'}$};
    \draw[thick] (10.85,-2) --++(0.45,0) node[midway,below] {$I_y$};
    \draw[thick] (6.3,-2) --++(0.7,0) node[midway,below] {$R$};

    \node at (-0.35,-2.3) {\tiny{$u_\ell, \ldots, u_1$}} ;
    \node at (1.4,-2.3) {\tiny{$u'_1, \ldots, u'_\ell$}} ;
    \node at (3,-2.3) {$J$} ;

    \node at (5.5,-2.3) {\tiny{$w_1, \ldots, w_\ell$}} ;
    \node at (8.1,-2.3) {\tiny{$w'_\ell, \ldots, w'_1$}} ;
    \node at (10.25,-2.3) {\tiny{$v'_\ell, \ldots, v'_1$}} ;
    \node at (11.85,-2.3) {\tiny{$v_1, \ldots, v_\ell$}} ;
    \node at (9.4,-2.3) {$J'$} ;
    
  \end{tikzpicture}
  \caption{The vertices and subsets used in the subsequent induced minor model of~$T'_{4,\ell}$.}
  \label{fig:constr-zigzagged}
  \end{figure}
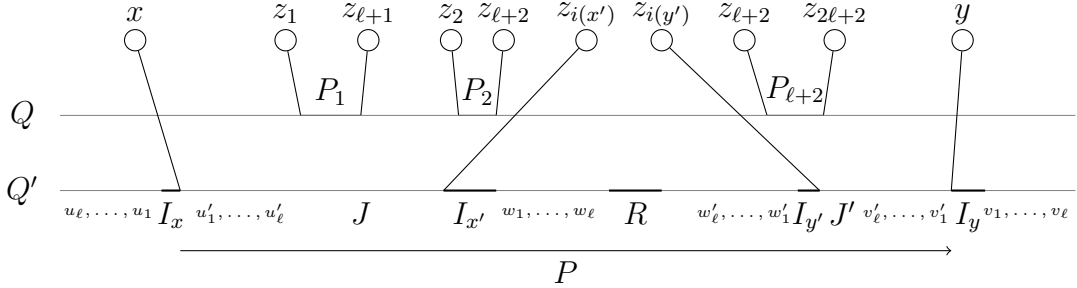

  We obtain an induced minor model $\mathcal M$ of an induced subdivision of~$T'_{4,\ell} \cup \ell K_1$ as follows:
  \[\mathcal M := \{\{x\}, \{u_1\}, \ldots, \{u_\ell\}, I_x, \{u'_1\}, \ldots, \{u'_\ell\}, J, \{w_1\}, \ldots, \{w_\ell\}, I_{x'}, V(P_{i(x')}),  R,\]
  \[V(P_1), \ldots, V(P_{i(x')-1}), V(P_{i(x')+1}), \ldots, V(P_{i(y')-1}), V(P_{i(y')+1}), \ldots, V(P_{\ell+2}), \{w'_1\}, \ldots, \{w'_\ell\},\]
  \[I_{y'} \cup V(P_{i(y')}), J', \{v'_1\}, \ldots, \{v'_\ell\}, I_y, \{v_1\}, \ldots, \{v_\ell\}, \{y\}, \{s_{\eta+1}\}, \ldots, \{s_{\eta+\ell}\}\}\]
  with $R$ being the branch set of the root of (the subdivision of) $T'_{4,\ell}$; see~\cref{fig:T4l-zigzagged}.
  The fact that the branch sets are pairwise disjoint and non-adjacent when corresponding to non-adjacent vertices of~$T'_{4,\ell}$ is due to the $(\ell+1)$-ampleness of~$\cro$. 
  \begin{figure}[!ht]
  \centering
  \begin{tikzpicture}[vertex/.style={draw,circle,inner sep=0.06cm}]
    \def\s{1.2}
    \def\t{0.2}
    \def\l{5}
    \pgfmathsetmacro{\h}{1/(\l+1))}
    \pgfmathsetmacro{\hp}{1/\l)}
    \pgfmathtruncatemacro{\lm}{\l-1}
    \pgfmathtruncatemacro{\lmm}{\lm-1}
    
    \foreach \i/\j/\l/\lab in {0/0/r/R, -3/-1.5/z1/{V(P_1)},-2/-1.5/z2/,-1/-1.5/z3/,0/-1.5/z4/,1/-1.5/z5/{V(P_{\ell+2})}, 2.2/-1.5/z6/,3.2/-1.5/z7/}{
      \node[vertex] (\l) at (\i * \s,\j * \s) {} ;
      \node[blue] at ($(\l) + (0,0.35)$) {$\lab$};
    }

    \node[blue] at ($(z6) + (0.1,0.35)$) {$V(P_{i(x')})$};
    \node[blue] at ($(z7) + (1,0.35)$) {$I_{y'} \cup V(P_{i(y')})$};

    \foreach \i in {1,...,7}{
      \draw (r) -- (z\i) ;
    }

    \foreach \i in {1,...,5}{
      \draw (z\i) --++ (2 * \t,-1 * \t) --++ (0.5 * \t,-2.5 * \t) --++ (-5 * \t,0) --++ (0.5 * \t, 2.5 * \t) -- (z\i) ;
      \node at ([yshift=-0.45cm]z\i) {$T_{1,\ell}$};
    }

    \node[vertex] (x) at (5 * \s,-4 * \s) {} ;
    \node[blue] at ($(x) + (0,0.35)$) {$\{x\}$};
    \foreach \p in {1,...,\l}{
        \path (z6) to node[vertex, pos=\p * \h] (a\p) {} (x) ;
    }
    \node[blue] at ($(a\l) + (0,0.35)$) {$I_x$};
    \node[blue] at ($(a1) + (0,0.35)$) {$I_{x'}$};
    
    \node[vertex] (y) at (7 * \s,-4 * \s) {} ;
    \node[blue] at ($(y) + (0,0.35)$) {$\{y\}$};
    \foreach \p in {1,...,\l}{
        \path (z7) to node[vertex, pos=\p * \h] (b\p) {} (y) ;
    }
    \node[blue] at ($(b\l) + (0,0.35)$) {$I_y$};
    
    \draw (z6) -- (a1) ;
    \draw (z7) -- (b1) ;
    \foreach \p [count = \pp from 2] in {1,...,\lm}{
      \draw (a\p) -- (a\pp) ;
      \draw (b\p) -- (b\pp) ;
    }
    \draw (a\l) -- (x) ;
    \draw (b\l) -- (y) ;

    \foreach \i in {x,y}{
      \draw (\i) --++ (2 * \t,-1 * \t) --++ (0.5 * \t,-2.5 * \t) --++ (-5 * \t,0) --++ (0.5 * \t, 2.5 * \t) -- (\i) ;
      \node at ([yshift=-0.45cm]\i) {$T_{1,\ell}$};
    }

    \begin{scope}[rotate=-40]
    \foreach \i in {z6}{
      \draw (\i) --++ (2 * \t,-1 * \t) --++ (0.5 * \t,-2.5 * \t) --++ (-5 * \t,0) --++ (0.5 * \t, 2.5 * \t) -- (\i) ;
      \node at ([yshift=-0.45cm]\i) {$T_{1,\ell}$};
    }
    \end{scope}

    \node[vertex] (xp) at (3.8 * \s, -4.5 * \s) {} ;
    \foreach \p in {1,...,\lm}{
        \path (a5) to node[vertex, pos=\p * \hp] (c\p) {} (xp) ;
    }

    \node[vertex] (yp) at (5.8 * \s, -4.5 * \s) {} ;
    \foreach \p in {1,...,\lm}{
        \path (b5) to node[vertex, pos=\p * \hp] (d\p) {} (yp) ;
    }

    \node[vertex] (zp) at (5 * \s, -2 * \s) {} ;
    \foreach \p in {1,...,\lm}{
        \path (z7) to node[vertex, pos=\p * \hp] (e\p) {} (zp) ;
    }
    
    \node[vertex] (zzp) at (2 * \s, -2.9 * \s) {} ;
    \foreach \p in {1,...,\lm}{
        \path (a1) to node[vertex, pos=\p * \hp] (f\p) {} (zzp) ;
    }

    \draw (a5) -- (c1) ;
    \draw (b5) -- (d1) ;
    \draw (z7) -- (e1) ;
    \draw (f1) -- (a1) ;
    \foreach \p [count = \pp from 2] in {1,...,\lmm}{
      \draw (c\p) -- (c\pp) ;
      \draw (d\p) -- (d\pp) ;
      \draw (e\p) -- (e\pp) ;
      \draw (f\p) -- (f\pp) ;
    }
    \draw (c4) -- (xp) ;
    \draw (d4) -- (yp) ;
    \draw (e4) -- (zp) ;
    \draw (f4) -- (zzp) ;

    \begin{scope}[blue]
    \node at (1,-3.1) {$\{w_1\}, \ldots, \{w_\ell\}$} ;
    \node at (3.1,-4) {$\{u'_1\}, \ldots, \{u'_\ell\}, J$} ;    
    \node at (3.2,-5) {$\{u_1\}, \ldots, \{u_\ell\}$} ;
    \node at (7.5,-2.9) {$\{v'_1\}, \ldots, \{v'_\ell\}, J'$} ;
    \node at (7,-2) {$\{w'_1\}, \ldots, \{w'_\ell\}$} ;
    \node at (7,-5.8) {$\{v_1\}, \ldots, \{v_\ell\}$} ;
    \end{scope}
  \end{tikzpicture}
  \caption{The mapping of the branch sets of~$\mathcal M$ in $T_{4,\ell}$.}
  \label{fig:T4l-zigzagged}
\end{figure}
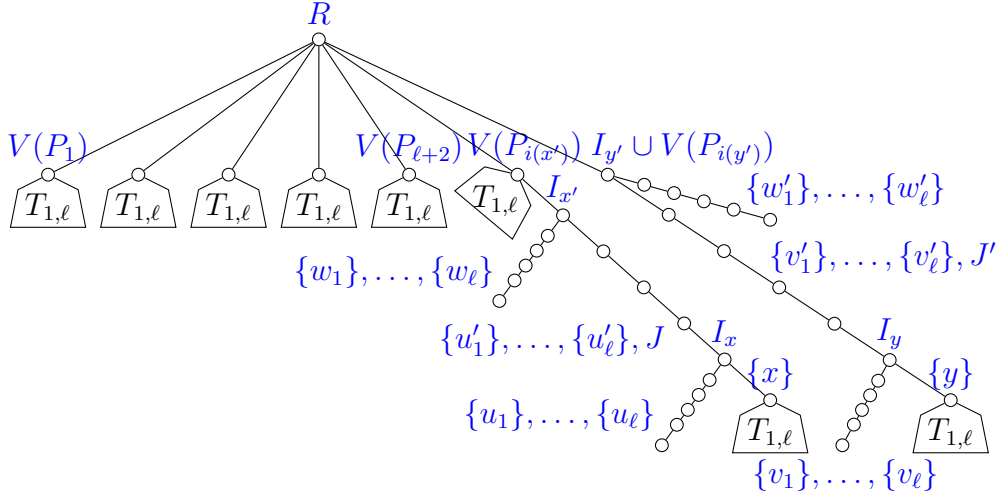
  
  From $\mathcal M$ we get an induced minor model of~$T_{4,\ell} \cup \ell T_{1,\ell}$ in $\cro$ by iteratively applying \cref{lem:induced-minor-extension} $(2\ell+3)\ell$ times; $\ell$ times to each of \[\{x\}, \{y\}, I_{x'} \cup V(P_{i(x')}), V(P_i)~\text{for}~i \in [\ell+2] \setminus \{i(x'),i(y')\}, \{s_{\eta+1}\}, \ldots, \{s_{\eta+\ell}\}.\]
  Note that $|\mathcal L_\cro|=(2\ell+3)\ell+2$ allows these $(\ell+3)\ell+\ell^2$ applications (with $Q$ and $Q'$ taking two paths of $\mathcal L_\cro$).
\end{proof}

\begin{lemma}\label{lem:t2l-in-interrupted}
  For any integers $\ell \geqslant 2$, any interrupted $(\ell+1)$-ample $(2\ell+4,(2\ell+3)\ell+1)$-constellation contains $T_{2,\ell} \cup \ell T_{1,\ell}$ and $T_{3,\ell} \cup \ell T_{1,\ell}$ as induced minors. 
\end{lemma}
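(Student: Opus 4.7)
I would mimic the construction of \cref{lem:t2l-in-zigzagged} to build in $\cro$ an induced minor model of (an induced subdivision of) $T'_{4,\ell} \cup \ell K_1$, then apply \cref{lem:induced-minor-extension} $(2\ell+3)\ell$ times to grow the required copies of $T_{1,\ell}$. First, enumerate $S_\cro = \{s_1 \prec \ldots \prec s_{2\ell+4}\}$, set $x := s_1$ and $y := s_{2\ell+4}$, pick any $Q' \in \mathcal L_\cro$, and take a shortest $\cro$-route $P$ from $x$ to $y$ with internal vertices on $Q'$. Applying the interrupted property to each triple $x \prec s_i \prec y$ shows that every $s_i$ with $i \in [2, 2\ell+3]$ has a neighbor in $V(P)$. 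Hence the $2\ell+2$ vertices $\{s_2, \ldots, s_{2\ell+3}\}$ are all adjacent to $P$, simultaneously playing the role of the $z_1, \ldots, z_{2\ell+2}$ and the endpoints of the direct $\cro$-routes $P_i$ from the zigzagged proof. Crucially, no second path of $\mathcal L_\cro$ is needed for these anchors, which accounts for the saving of one path in the bound $p = (2\ell+3)\ell + 1$.

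I would then define $I_x, I_y$, identify $x' \in V(P)$ as the first vertex of $P$ adjacent to some $s_i$ with $i \in [2, 2\ell+3]$ and $y' \in V(P)$ as the last such vertex, set the auxiliary sets $I_{x'}, I_{y'}, u_i, u'_i, v_i, v'_i, w_i, w'_i, J, J'$ in direct analogy with the zigzagged proof, and finally define $R$ as a minimal subpath of $P$ strictly between $\{w_1, \ldots, w_\ell\}$ and $\{w'_1, \ldots, w'_\ell\}$ that is adjacent to $\ell$ chosen vertices of $\{s_2, \ldots, s_{2\ell+3}\} \setminus \{s_{x'}, s_{y'}\}$, where $s_{x'}, s_{y'}$ denote the $s$-vertices whose intervals are $I_{x'}, I_{y'}$. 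The resulting induced minor model $\mathcal M$ has the same overall shape as in the zigzagged case, with each branch set $V(P_i)$ replaced by a single-vertex set $\{s\}$: the $\ell$ original leaves of $T'_{4,\ell}$ correspond to the $\ell$ chosen $s$-singletons adjacent to $R$; the two non-leaf children of the root of $T'_{4,\ell}$ correspond to $\{s_{x'}\}, \{s_{y'}\}$; the remainder of the $x$- and $y$-subdivided paths lies along $P$ exactly as before; and the $\ell$ further singletons for the disjoint copies of $T_{1,\ell}$ come from the remaining $s$-vertices.

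The main obstacle I anticipate is ensuring that $\{s_{x'}\}$ is adjacent to $R$ while $R$ and $I_{x'}$ remain disjoint branch sets (and symmetrically for $y'$). Since the only neighbors of $s_{x'}$ on $P$ lie in $I_{x'}$, the naive singleton construction fails, and I would instead mimic the zigzagged proof's trick of merging sets (compare the merged branch set $I_{y'} \cup V(P_{i(y')})$): either absorb $\{s_{x'}\}$ into an enlarged root branch set, or couple it with a sliver of $I_{x'}$, at the cost of at most a bounded number of extra subdivision vertices on the corresponding edges of $T'_{4,\ell}$. Verification of pairwise disjointness and the correct adjacencies between branch sets relies on $(\ell+1)$-ampleness in its strengthened form. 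To conclude, I apply \cref{lem:induced-minor-extension} exactly $\ell$ times to each of the $\ell+3$ branch sets anchoring a $T_{1,\ell}$ in $T_{4,\ell}$ and to each of the $\ell$ singletons for $\ell T_{1,\ell}$ — totalling $(2\ell+3)\ell$ applications, each consuming one path from $\mathcal L_\cro$, matching the budget of $p = (2\ell+3)\ell + 1$ once $Q'$ is accounted for.
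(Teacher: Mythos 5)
There is a genuine gap at the very first step, and it propagates through the whole construction. Your central claim is that, having set $x := s_1$ and $y := s_{2\ell+4}$ and taken a shortest $\cro$-route $P$ between them, ``applying the interrupted property to each triple $x \prec s_i \prec y$'' forces every $s_i$ with $i \in [2,2\ell+3]$ to have a neighbor in $V(P)$. This misreads the definition: interruptedness says that a $\cro$-route between $a$ and $b$ is adjacent to every $c$ with $a \prec b \prec c$, i.e., to the vertices lying \emph{above both endpoints} in $\prec$, not to the vertices between them. The triple $x \prec s_i \prec y$ only constrains routes between $x$ and $s_i$; it says nothing about your route $P$ between $x$ and $y$. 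Since you chose $y$ to be the maximum of $\prec$, the interrupted condition imposes no adjacency on $P$ at all, and none of your anchors is guaranteed to exist. The paper instead takes $x := s_{\ell+1}$ and $y := s_{\ell+2}$, two \emph{consecutive middle} elements: then every $s_j$ with $j > \ell+2$ must be adjacent to $P$, while (using that $P$ is shortest) the bottom $\ell$ vertices $s_1,\dots,s_\ell$ are non-adjacent to $P$ — and these are exactly the vertices needed to root the $\ell$ disjoint copies of $T_{1,\ell}$. Note that your construction cannot supply those $\ell$ isolated roots: if all of $s_2,\dots,s_{2\ell+3}$ were adjacent to $V(P)$ as you claim, the ``remaining'' singletons you reserve for $\ell T_{1,\ell}$ would each be adjacent to some branch set lying on $P$.

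A second structural feature of the interrupted case is also missed by transplanting the zigzagged argument. In that argument one shows $i(x') \neq i(y')$, so the first and last anchored vertices of $P$ attach to \emph{different} objects. In the interrupted case the opposite happens: if $x'$ (resp.\ $y'$) is the first (resp.\ last) vertex of $P$ with a neighbor in $S_\cro \setminus \{x,y\}$, that neighbor must in both cases be the \emph{maximum} element $z$ of $\prec$ (any smaller neighbor $s_j$ would yield a short route from $x$ to $s_j$ that violates interruptedness with respect to $z$, and by $(\ell+1)$-ampleness a path vertex has at most one neighbor in $S_\cro$). So $s_{x'} = s_{y'} = z$, and the topology of the model is genuinely different from the zigzagged one; this is precisely why the paper builds $T_{2,\ell}$ and $T_{3,\ell}$ via two separate models (with $z$ playing the role of the pendant child $\{z\}$ of the root in one, and of part of the root branch set $R' \cup \{z\}$ in the other) rather than a single model of $T_{4,\ell}$. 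Your proposed fix of ``absorbing $\{s_{x'}\}$ into the root or coupling it with a sliver of $I_{x'}$'' gestures at the right repair but cannot work as stated while $s_{x'}$ and $s_{y'}$ are treated as two distinct children of the root.
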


\begin{proof}
  Let $\cro = (S_\cro, \mathcal L_\cro)$ be an interrupted $(\ell+1)$-ample $(2\ell+4,(2\ell+3)\ell+1)$-constellation with total order $\prec$ on $S_\cro$.
  Let $s_1 \prec \ldots \prec s_{2 \ell +4}$ enumerate $S_\cro$.
  We set $x := s_{\ell+1}$, $y := s_{\ell+2}$, $z := s_{2\ell+4}$, and $S_m := \{s_1, \ldots, s_{\ell}\}$.
  Let $P$ be a~shortest $\cro$-route between $x$ and $y$ using path $Q \in \mathcal L_\cro$.
  As $\cro$ is interrupted and $P$ is shortest, $V(P)$ and $S_m$ are non-adjacent.
  Let $I_x$ (resp.~$I_y$) be the interval of the neighbor of $x$ (resp.~of~$y$) on~$P$.
  Let $u_1, \ldots, u_\ell$ (resp.~$u'_1, \ldots, u'_\ell$) be the vertices of $Q$ just before (resp.~just after) $I_x$.
  Let $v_1, \ldots, v_\ell$ (resp.~$v'_1, \ldots, v'_\ell$) be the vertices of $Q$ just after (resp.~just before) $I_y$.
  
  As $\cro$ is interrupted, every vertex of $S_\cro \setminus S_m$ has a~neighbor in~$V(P)$.
  Let $x' \in V(P)$ (resp.~$y' \in V(P)$) be the first (resp.~last) vertex of~$P$ with a~neighbor in $S_\cro \setminus \{x,y\}$.
  Again because $\cro$ is interrupted, the unique neighbor of~$x'$ (resp.~$y'$) in $S_\cro$ has to be~$z$.
  Let $I_{x'}$ (resp.~$I_{y'}$) be the interval of~$x'$ (resp.~of~$y'$).

  We first build an induced minor model of~$T_{2,\ell} \cup \ell T_{1,\ell}$.
  Let $w_1, \ldots, w_\ell$ be the vertices of~$P$ just after $I_{x'}$.
  Let $R$ be the vertex set of the subpath of~$P$ between the first vertex after $w_\ell$ with a~neighbor in~$S_\cro$ (note that by $(\ell+1)$-ampleness this vertex is not adjacent to~$w_\ell$) and the last vertex of~$I_{y'}$.
  We observe that $N[R] \cap S_\cro = S_\cro \setminus (S_m \cup \{x,y\})$.
  Let $x_1, \ldots, x_\ell$ be $\ell$~distinct vertices of~$S_\cro \setminus (S_m \cup \{x,y,z\})$.
  We denote by $J$ (resp.~$J'$) the subset of vertices of~$P$ strictly between $\{u'_1, \ldots, u'_\ell\}$ and $I_{x'}$ (resp.~strictly between $R$ and $\{v'_1, \ldots, v'_\ell\}$).
  See~\cref{fig:constr-interrupted-T2l}.
  \begin{figure}[!ht]
  \centering
  \begin{tikzpicture}[vertex/.style={draw,circle,inner sep=0.1cm}]
    \foreach \i/\lab/\l in {-0.5/x/x,0/y/y,2/z/z,0.5/{x_1}/x1,1.5/{x_\ell}/xl}{
      \node[vertex] (\l) at (6,\i) {} ;
      \node at (5.5,\i) {$\lab$} ;
    }

    \node at (6,1.1) {$\vdots$} ;
    \node at (6,-1) {$\vdots$} ;
    
    \node at (-1.5,-2) {$Q$} ;
    \draw[gray,very thin] (-1,-2) -- (12.5,-2) ;

    \draw (x) -- (0.6,-2) ;
    \draw (z) -- (4.1,-2) ;
    \draw (z) -- (9.1,-2) ;
    \draw (y) -- (10.85,-2) ;

    \draw[->] (0.6,-2.8) to node[below,midway] {$P$} (10.85,-2.8) ;

    \draw[thick] (0.35,-2) --++(0.25,0) node[midway,below] {$I_x$};
    \draw[thick] (4.1,-2) --++(0.7,0) node[midway,below] {$I_{x'}$};
    \draw[thick] (10.85,-2) --++(0.45,0) node[midway,below] {$I_y$};
    \draw[thick] (6.8,-2) -- (9.1,-2) node[midway,below] {$R$};

    \node at (-0.35,-2.3) {\tiny{$u_\ell, \ldots, u_1$}} ;
    \node at (1.4,-2.3) {\tiny{$u'_1, \ldots, u'_\ell$}} ;
    \node at (3,-2.3) {$J$} ;

    \node at (5.5,-2.3) {\tiny{$w_1, \ldots, w_\ell$}} ;
    \node at (10.25,-2.3) {\tiny{$v'_\ell, \ldots, v'_1$}} ;
    \node at (11.85,-2.3) {\tiny{$v_1, \ldots, v_\ell$}} ;
    \node at (9.4,-2.3) {$J'$} ;
    
  \end{tikzpicture}
  \caption{The vertices and subsets used in the subsequent induced minor model of~$T'_{2,\ell}$.}
  \label{fig:constr-interrupted-T2l}
  \end{figure}
  
  We get an induced minor model $\mathcal M$ of an induced subdivision of~$T'_{2,\ell} \cup \ell K_1$ as follows:
  \[\mathcal M := \{\{x\}, \{u_1\}, \ldots, \{u_\ell\}, I_x, \{u'_1\}, \ldots, \{u'_\ell\}, J, \{w_1\}, \ldots, \{w_\ell\}, I_{x'}, \{z\},  R, \{x_1\}, \ldots, \{x_\ell\},\]
  \[J', \{v'_1\}, \ldots, \{v'_\ell\}, I_y, \{v_1\}, \ldots, \{v_\ell\}, \{y\}, \{s_1\}, \ldots, \{s_\ell\}\}\]
  with $R$ being the branch set of the root; see~\cref{fig:T2l-interrupted}.
  We then get an induced minor model of $T_{2,\ell} \cup \ell T_{1,\ell}$ as before, by applying~\cref{lem:induced-minor-extension} $(2\ell+3)\ell$ times; $\ell$ times to each branch set of $\{x\}$, $\{y\}$, $I_{x'} \cup \{z\}$, $\{x_1\}, \ldots, \{x_\ell\}, \{s_1\}, \ldots, \{s_\ell\}$.
  \begin{figure}[!ht]
  \centering
  \begin{tikzpicture}[vertex/.style={draw,circle,inner sep=0.06cm},scale=.9]
    \def\s{1.2}
    \def\t{0.2}
    \def\l{5}
    \pgfmathsetmacro{\h}{1/(\l+1))}
    \pgfmathsetmacro{\hp}{1/\l)}
    \pgfmathtruncatemacro{\lm}{\l-1}
    \pgfmathtruncatemacro{\lmm}{\lm-1}
    
    \foreach \i/\j/\l/\lab in {0/0/r/R, -3/-1.5/z1/{\{x_1\}},-2/-1.5/z2/{\{x_2\}},-1/-1.5/z3/{\{x_3\}},0/-1.5/z4/{\ldots},1/-1.5/z5/{\{x_\ell\}}, 2.2/-1.5/z6/{\{z\}},3.2/-1.5/z7/}{
      \node[vertex] (\l) at (\i * \s,\j * \s) {} ;
      \node[blue] at ($(\l) + (0,0.35)$) {$\lab$};
    }

    \foreach \i in {1,...,7}{
      \draw (r) -- (z\i) ;
    }

    \foreach \i in {1,...,5}{
      \draw (z\i) --++ (2 * \t,-1 * \t) --++ (0.5 * \t,-2.5 * \t) --++ (-5 * \t,0) --++ (0.5 * \t, 2.5 * \t) -- (z\i) ;
      \node at ([yshift=-0.45cm]z\i) {$T_{1,\ell}$};
    }

    \node[vertex] (x) at (5 * \s,-4 * \s) {} ;
    \node[blue] at ($(x) + (0,0.35)$) {$\{x\}$};
    \foreach \p in {1,...,\l}{
        \path (z6) to node[vertex, pos=\p * \h] (a\p) {} (x) ;
    }
    \node[blue] at ($(a\l) + (0,0.35)$) {$I_x$};
    \node[blue] at ($(a1) + (0,0.35)$) {$I_{x'}$};
    
    \node[vertex] (y) at (7 * \s,-4 * \s) {} ;
    \node[blue] at ($(y) + (0,0.35)$) {$\{y\}$};
    \foreach \p in {1,...,\l}{
        \path (z7) to node[vertex, pos=\p * \h] (b\p) {} (y) ;
    }
    \node[blue] at ($(b\l) + (0,0.35)$) {$I_y$};
    
    \draw (z6) -- (a1) ;
    \draw (z7) -- (b1) ;
    \foreach \p [count = \pp from 2] in {1,...,\lm}{
      \draw (a\p) -- (a\pp) ;
      \draw (b\p) -- (b\pp) ;
    }
    \draw (a\l) -- (x) ;
    \draw (b\l) -- (y) ;

    \foreach \i in {x,y}{
      \draw (\i) --++ (2 * \t,-1 * \t) --++ (0.5 * \t,-2.5 * \t) --++ (-5 * \t,0) --++ (0.5 * \t, 2.5 * \t) -- (\i) ;
      \node at ([yshift=-0.45cm]\i) {$T_{1,\ell}$};
    }

    \begin{scope}[rotate=-40]
    \foreach \i in {z6}{
      \draw (\i) --++ (2 * \t,-1 * \t) --++ (0.5 * \t,-2.5 * \t) --++ (-5 * \t,0) --++ (0.5 * \t, 2.5 * \t) -- (\i) ;
      \node at ([yshift=-0.45cm]\i) {$T_{1,\ell}$};
    }
    \end{scope}

    \node[vertex] (xp) at (3.8 * \s, -4.5 * \s) {} ;
    \foreach \p in {1,...,\lm}{
        \path (a5) to node[vertex, pos=\p * \hp] (c\p) {} (xp) ;
    }

    \node[vertex] (yp) at (5.8 * \s, -4.5 * \s) {} ;
    \foreach \p in {1,...,\lm}{
        \path (b5) to node[vertex, pos=\p * \hp] (d\p) {} (yp) ;
    }

    \node[vertex] (zp) at (2 * \s, -2.9 * \s) {} ;
    \foreach \p in {1,...,\lm}{
        \path (a1) to node[vertex, pos=\p * \hp] (e\p) {} (zp) ;
    }

    \draw (a5) -- (c1) ;
    \draw (b5) -- (d1) ;
    \draw (a1) -- (e1) ;
    \foreach \p [count = \pp from 2] in {1,...,\lmm}{
      \draw (c\p) -- (c\pp) ;
      \draw (d\p) -- (d\pp) ;
      \draw (e\p) -- (e\pp) ;
    }
    \draw (c4) -- (xp) ;
    \draw (d4) -- (yp) ;
    \draw (e4) -- (zp) ;

    \begin{scope}[blue]
    \node at (1,-3.1) {$\{w_1\}, \ldots, \{w_\ell\}$} ;
    \node at (3.2,-5) {$\{u_1\}, \ldots, \{u_\ell\}$} ;
    \node at (7.2,-2.6) {$\{v'_1\}, \ldots, \{v'_\ell\}, J'$} ;
    \node at (7,-5.8) {$\{v_1\}, \ldots, \{v_\ell\}$} ;
    \node at (3,-4) {$\{u'_1\}, \ldots, \{u'_\ell\}, J$} ;
    \end{scope}
  \end{tikzpicture}
  \caption{The mapping of the branch sets of~$\mathcal M$ in $T_{2,\ell}$ (without the $\ell$ isolated vertices).}
  \label{fig:T2l-interrupted}
\end{figure}
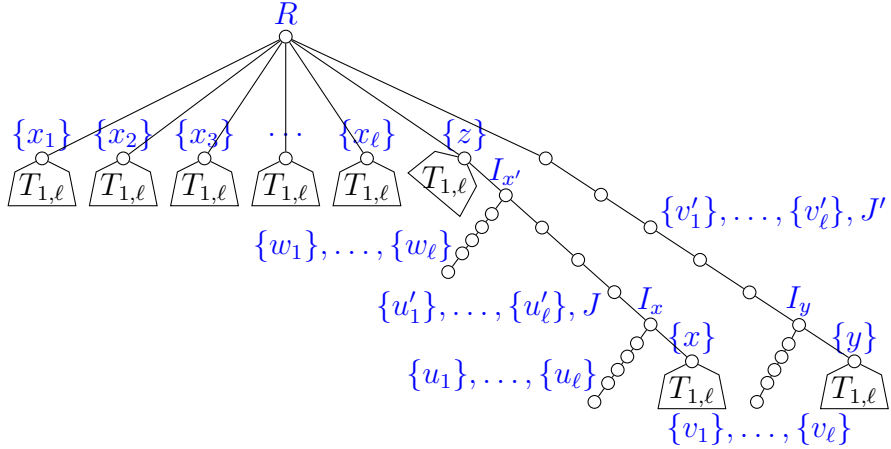

  We now build an induced minor model of~$T_{3,\ell} \cup \ell T_{1,\ell}$.
  Let $L$ be the set of vertices of~$P$ strictly between $u'_\ell$ and the first vertex after $u'_\ell$ with a~neighbor $z'$ in $S_\cro \setminus \{z\}$.
  We rename $x_1, \ldots, x_\ell$ the vertices of $S_\cro \setminus \{x,y,z,z'\}$.
  Let $R'$ be the vertex set of the minimal subpath of~$P$ starting just after $L$ and being adjacent to all the vertices of $S_\cro \setminus \{x,y,z,z'\}$.
  Let $w'_1, \ldots, w'_\ell$ be the vertices of~$P$ just before $I_{y'}$.
  Let $J''$ be the set of vertices of~$P$ strictly between $I_{y'}$ and $v'_1, \ldots, v'_\ell$.
  See~\cref{fig:constr-interrupted-T3l}.
   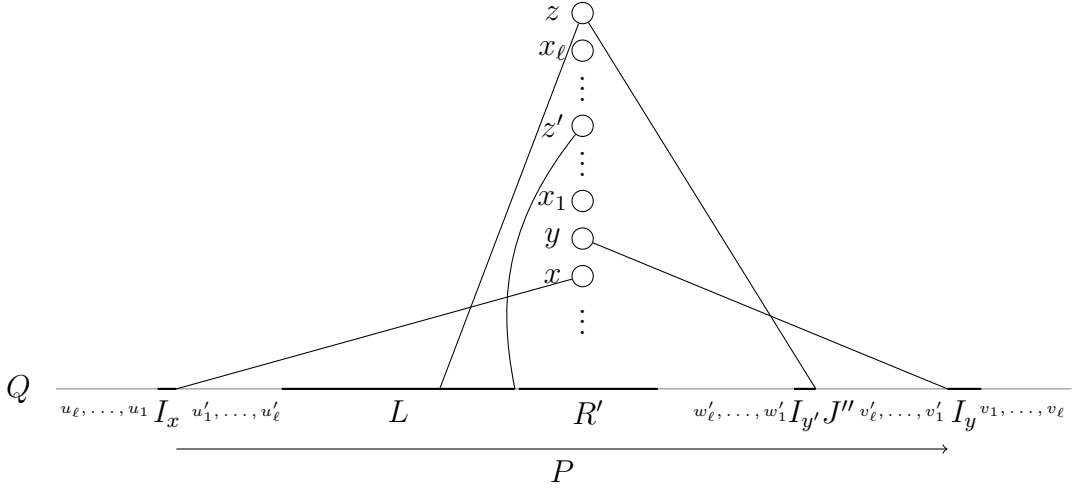
\begin{figure}[!ht]
  \centering
  \begin{tikzpicture}[vertex/.style={draw,circle,inner sep=0.1cm}]
    \foreach \i/\lab/\l in {-0.5/x/x,0/y/y,3/z/z,0.5/{x_1}/x1,2.5/{x_\ell}/xl,1.5/{z'}/zp}{
      \node[vertex] (\l) at (6,\i) {} ;
      \node at (5.6,\i) {$\lab$} ;
    }

    \node at (6,-1) {$\vdots$} ;
    \node at (6,2.1) {$\vdots$} ;
    \node at (6,1.1) {$\vdots$} ;
    \node at (-1.5,-2) {$Q$} ;
    \draw[gray,very thin] (-1,-2) -- (12.5,-2) ;

    \draw (x) -- (0.6,-2) ;
    \draw (z) -- (4.1,-2) ;
    \draw (z) -- (9.1,-2) ;
    \draw (y) -- (10.85,-2) ;
    \draw (zp) to [bend left=-25] (5.1,-2) ;

    \draw[->] (0.6,-2.8) to node[below,midway] {$P$} (10.85,-2.8) ;

    \draw[thick] (0.35,-2) --++(0.25,0) node[midway,below] {$I_x$};
    \draw[thick] (2,-2) -- (5.1,-2) node[midway,below] {$L$};
    \draw[thick] (5.15,-2) -- (7,-2) node[midway,below] {$R'$};
    \draw[thick] (8.81,-2) --++(0.29,0) node[midway,below] {$I_{y'}$};
    \draw[thick] (10.85,-2) --++(0.45,0) node[midway,below] {$I_y$};

    \node at (-0.35,-2.3) {\tiny{$u_\ell, \ldots, u_1$}} ;
    \node at (1.4,-2.3) {\tiny{$u'_1, \ldots, u'_\ell$}} ;

    \node at (8.1,-2.3) {\tiny{$w'_\ell, \ldots, w'_1$}} ;
    \node at (10.25,-2.3) {\tiny{$v'_\ell, \ldots, v'_1$}} ;
    \node at (11.85,-2.3) {\tiny{$v_1, \ldots, v_\ell$}} ;
    \node at (9.4,-2.3) {$J''$} ;
    
  \end{tikzpicture}
  \caption{The vertices and subsets used in the subsequent induced minor model of~$T'_{3,\ell}$.}
  \label{fig:constr-interrupted-T3l}
  \end{figure}

  We get an induced minor model $\mathcal M'$ of an induced subdivision of~$T'_{3,\ell} \cup \ell K_1$ as follows:
  \[\mathcal M' := \{\{x\}, \{u_1\}, \ldots, \{u_\ell\}, I_x, \{u'_1\}, \ldots, \{u'_\ell\}, L \cup \{z'\}, R' \cup \{z\}, \{x_1\}, \ldots, \{x_\ell\},\]
   \[\{w'_1\}, \ldots, \{w'_\ell\}, I_{y'}, J'', \{v'_1\}, \ldots, \{v'_\ell\}, I_y, \{v_1\}, \ldots, \{v_\ell\}, \{y\}, \{s_1\}, \ldots, \{s_\ell\}\}\]
   with $R' \cup \{z\}$ the branch set of the root; see~\cref{fig:T3l-interrupted}.
     \begin{figure}[!ht]
  \centering
  \begin{tikzpicture}[vertex/.style={draw,circle,inner sep=0.06cm},scale=.9]
    \def\s{1.2}
    \def\t{0.2}
    \def\l{5}
    \pgfmathsetmacro{\h}{1/(\l+1))}
    \pgfmathsetmacro{\hp}{1/\l)}
    \pgfmathtruncatemacro{\lm}{\l-1}
    \pgfmathtruncatemacro{\lmm}{\lm-1}
    
    \foreach \i/\j/\l/\lab in {0/0/r/{R' \cup \{z\}}, -3/-1.5/z1/{\{x_1\}},-2/-1.5/z2/{\{x_2\}},-1/-1.5/z3/{\{x_3\}},0/-1.5/z4/{\ldots},1/-1.5/z5/{\{x_\ell\}}, 2.2/-1.5/z6/{L \cup \{z'\}},3.2/-1.5/z7/{I_{y'}}}{
      \node[vertex] (\l) at (\i * \s,\j * \s) {} ;
      \node[blue] at ($(\l) + (0,0.35)$) {$\lab$};
    }

    \foreach \i in {1,...,7}{
      \draw (r) -- (z\i) ;
    }

    \foreach \i in {1,...,5}{
      \draw (z\i) --++ (2 * \t,-1 * \t) --++ (0.5 * \t,-2.5 * \t) --++ (-5 * \t,0) --++ (0.5 * \t, 2.5 * \t) -- (z\i) ;
      \node at ([yshift=-0.45cm]z\i) {$T_{1,\ell}$};
    }

    \node[vertex] (x) at (5 * \s,-4 * \s) {} ;
    \node[blue] at ($(x) + (0,0.35)$) {$\{x\}$};
    \foreach \p in {1,...,\l}{
        \path (z6) to node[vertex, pos=\p * \h] (a\p) {} (x) ;
    }
    \node[blue] at ($(a\l) + (0,0.35)$) {$I_x$};
    
    \node[vertex] (y) at (7 * \s,-4 * \s) {} ;
    \node[blue] at ($(y) + (0,0.35)$) {$\{y\}$};
    \foreach \p in {1,...,\l}{
        \path (z7) to node[vertex, pos=\p * \h] (b\p) {} (y) ;
    }
    \node[blue] at ($(b\l) + (0,0.35)$) {$I_y$};
    
    \draw (z6) -- (a1) ;
    \draw (z7) -- (b1) ;
    \foreach \p [count = \pp from 2] in {1,...,\lm}{
      \draw (a\p) -- (a\pp) ;
      \draw (b\p) -- (b\pp) ;
    }
    \draw (a\l) -- (x) ;
    \draw (b\l) -- (y) ;

    \foreach \i in {x,y}{
      \draw (\i) --++ (2 * \t,-1 * \t) --++ (0.5 * \t,-2.5 * \t) --++ (-5 * \t,0) --++ (0.5 * \t, 2.5 * \t) -- (\i) ;
      \node at ([yshift=-0.45cm]\i) {$T_{1,\ell}$};
    }

    \begin{scope}[rotate=-40]
    \foreach \i in {z6}{
      \draw (\i) --++ (2 * \t,-1 * \t) --++ (0.5 * \t,-2.5 * \t) --++ (-5 * \t,0) --++ (0.5 * \t, 2.5 * \t) -- (\i) ;
      \node at ([yshift=-0.45cm]\i) {$T_{1,\ell}$};
    }
    \end{scope}

    \node[vertex] (xp) at (3.8 * \s, -4.5 * \s) {} ;
    \foreach \p in {1,...,\lm}{
        \path (a5) to node[vertex, pos=\p * \hp] (c\p) {} (xp) ;
    }

    \node[vertex] (yp) at (5.8 * \s, -4.5 * \s) {} ;
    \foreach \p in {1,...,\lm}{
        \path (b5) to node[vertex, pos=\p * \hp] (d\p) {} (yp) ;
    }

    \node[vertex] (zp) at (5 * \s, -2 * \s) {} ;
    \foreach \p in {1,...,\lm}{
        \path (z7) to node[vertex, pos=\p * \hp] (e\p) {} (zp) ;
    }

    \draw (a5) -- (c1) ;
    \draw (b5) -- (d1) ;
    \draw (z7) -- (e1) ;
    \foreach \p [count = \pp from 2] in {1,...,\lmm}{
      \draw (c\p) -- (c\pp) ;
      \draw (d\p) -- (d\pp) ;
      \draw (e\p) -- (e\pp) ;
    }
    \draw (c4) -- (xp) ;
    \draw (d4) -- (yp) ;
    \draw (e4) -- (zp) ;

    \begin{scope}[blue]
    \node at (3.2,-5) {$\{u_1\}, \ldots, \{u_\ell\}$} ;
    \node at (7.5,-2.9) {$\{v'_1\}, \ldots, \{v'_\ell\}, J''$} ;
    \node at (7,-2) {$\{w'_1\}, \ldots, \{w'_\ell\}$} ;
    \node at (7,-5.8) {$\{v_1\}, \ldots, \{v_\ell\}$} ;
    \node at (2.45,-3.5) {$\{u'_1\}, \ldots, \{u'_\ell\}$} ;
    \end{scope}
  \end{tikzpicture}
  \caption{The mapping of the branch sets of~$\mathcal M'$ in $T_{3,\ell}$.}
  \label{fig:T3l-interrupted}
     \end{figure}
     We finally get an induced minor model of $T_{3,\ell} \cup \ell T_{1,\ell}$ by applying~\cref{lem:induced-minor-extension} $(2\ell+3)\ell$ times; $\ell$ times to each branch set of $\{x\}$, $\{y\}$, $L \cup \{z'\}$, $\{x_1\}, \ldots, \{x_\ell\}, \{s_1\}, \ldots, \{s_\ell\}$.
\end{proof}

\cref{lem:pw-constell,lem:t2l-in-zigzagged,lem:t2l-in-interrupted} implies the following.
\begin{lemma}\label{lem:posi}
  For any integers $\ell > t \geqslant 1$, there is an integer $w := w(\ell,t)$ such that every graph of pathwidth at~least~$w$ with no $K_{t,t}$ subgraph admits $T_{2,\ell} \cup \ell T_{1,\ell}$ and $T_{3,\ell} \cup \ell T_{1,\ell}$ as induced minors.
  Hence, every forest that is an induced minor of some $T_{2,\ell} \cup \ell T_{1,\ell}$ or $T_{3,\ell} \cup \ell T_{1,\ell}$ is \posi.
\end{lemma}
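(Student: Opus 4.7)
The plan is to chain the three preceding lemmas: \cref{lem:pw-constell} converts large pathwidth (together with $K_{t,t}$-subgraph-freeness and $F$-induced-minor-freeness) into a large $d$-ample constellation that is either interrupted or $q$-zigzagged, and then \cref{lem:t2l-in-interrupted} or \cref{lem:t2l-in-zigzagged} pulls out the desired induced minor from such a constellation. The only subtlety is that \cref{lem:t2l-in-zigzagged} requires $\ell > q$, while the value of $q$ produced by \cref{lem:pw-constell} depends on $|V(F)|$ and hence on $\ell$ itself. To break this circularity I would pass, if necessary, to a slightly larger index $\ell'$.

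Concretely, I argue by contradiction. Fix $\ell > t \geqslant 1$ and set $F := T_{2,\ell} \cup \ell T_{1,\ell}$; the case $F := T_{3,\ell} \cup \ell T_{1,\ell}$ is identical. Let $q := f_{\ref{lem:pw-constell}}(|V(F)|, t)$, let $\ell' := \max(\ell, q+1)$ and $F' := T_{2,\ell'} \cup \ell' T_{1,\ell'}$, observing that $F$ is an induced minor of $F'$ since $\ell' \geqslant \ell$. Choose parameters $s := 2(q+2)(q+\ell'+2)$, $p := (2\ell'+3)\ell'+2$, and $d := \ell'+1$, which jointly dominate the requirements of both \cref{lem:t2l-in-interrupted} and \cref{lem:t2l-in-zigzagged} with parameters $\ell'$ and $q$. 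Finally, set $w := g_{\ref{lem:pw-constell}}(|V(F)|, t, s, p, d)$. Now suppose a $K_{t,t}$-subgraph-free graph $G$ has pathwidth at least $w$ but does not contain $F$ as an induced minor. By \cref{lem:pw-constell}, $G$ admits as an induced subgraph a $d$-ample $(s,p)$-constellation that is either interrupted or $q$-zigzagged; by \cref{lem:t2l-in-interrupted} or \cref{lem:t2l-in-zigzagged} respectively, this constellation contains $F'$, hence $F$, as an induced minor, a contradiction.

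The second sentence of the lemma follows immediately. Given any forest $F$ that is an induced minor of some $T_{2,\ell_0} \cup \ell_0 T_{1,\ell_0}$ or $T_{3,\ell_0} \cup \ell_0 T_{1,\ell_0}$, and given any weakly sparse class $\mathcal C$ (say, $K_{t,t}$-subgraph-free) excluding $F$ as an induced minor, I set $\ell := \max(\ell_0, t+1)$ so that $\ell > t$ and $F$ is still an induced minor of $T_{2,\ell} \cup \ell T_{1,\ell}$ (and its $T_{3,\ell}$ counterpart). Applying the first part of the lemma to any $G \in \mathcal C$ then yields that the pathwidth of $G$ is strictly less than $w(\ell, t)$, so $\mathcal C$ has bounded pathwidth, proving $F$ is \posi. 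I do not anticipate any genuine obstacle beyond this parameter bookkeeping; the only thing that needs verifying carefully is that the chosen $s, p, d$ indeed satisfy the numerical hypotheses of both \cref{lem:t2l-in-interrupted,lem:t2l-in-zigzagged} simultaneously.
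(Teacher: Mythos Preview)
Your proposal is correct and follows the same approach as the paper, which simply states that \cref{lem:pw-constell,lem:t2l-in-zigzagged,lem:t2l-in-interrupted} together imply the lemma. Your careful handling of the potential circularity between $\ell$ and $q$ (by passing to $\ell'$) makes explicit a piece of bookkeeping that the paper leaves implicit.
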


Finally, \cref{lem:posi,lem:nega2} establish~\cref{thm:main-alt}.
Recall indeed that any graph that is not a~forest is \nega.
Besides, in~\cref{sec:negative-forests} we showed that every forest that is not an induced minor of any $T_{2,\ell} \cup \ell T_{1,\ell}$ or $T_{3,\ell} \cup \ell T_{1,\ell}$ satisfies the preconditions of~\cref{lem:npd}, which corresponds for forests to the first item of~\cref{thm:main}, or the preconditions of~\cref{lem:nds}, which corresponds to the second item of~\cref{thm:main}.
As we have proved in this section that all the other forests are \posi, they cannot satisfy either item, which finishes the proof of~\cref{thm:main}.

\bibliographystyle{abbrv}

\end{document}